\theoremstyle{plain}
\newtheorem{theorem}{Theorem}
\newtheorem{lemma}[theorem]{Lemma}
\newtheorem{corollary}[theorem]{Corollary}
\theoremstyle{definition} 
\newtheorem{example}[theorem]{Example}
\newtheorem{definition}[theorem]{Definition}
\newcounter{claim}
\renewcommand{\theclaim}{\Alph{claim}}
\newenvironment{claim}{\refstepcounter{claim}%
\par\medskip\par\noindent{\it Claim~\theclaim.~}~\rm}%
{\par\smallskip\par}
\newenvironment{subproof}{\par\noindent{\sl Proof of Claim~\theclaim.~}}%
{$\,\triangleleft$\par\medskip\par}
\def\@gifnextchar#1#2#3{\let\@tempe#1\def\@tempa{#2}\def\@tempb{#3}%
  \futurelet\@tempc\@gifnch}
\def\@gifnch{\ifx\@tempc\@sptoken\let\@tempd\@tempb%
  \else\ifx\@tempc\@tempe\let\@tempd\@tempa\else\let\@tempd\@tempb\fi\fi\@tempd}
\def\SK@set#1{\left\{#1\right\}}
\def\SK@@set#1#2{\{#1\,:\,
    \begin{array}{@{}l@{}}#2\end{array}
\}}
\def\SK@mset#1{\left\{\!\!\left\{#1\right\}\!\!\right\}}
\def\SK@@mset#1#2{\{\!\!\{#1\,:\,
    \begin{array}{@{}l@{}}#2\end{array}
\}\!\!\}}
\def\BIG@set#1{\Big\{#1\Big\}}
\def\BIG@@set#1#2{\Big\{#1\:\Big|\:
    \begin{array}{@{}l@{}}#2\end{array}
\Big\}}
\newcommand{\Set}[1]{\@gifnextchar\bgroup{\SK@@set{#1}}{\SK@set{#1}}}
\newcommand{\Mset}[1]{\@gifnextchar\bgroup{\SK@@mset{#1}}{\SK@mset{#1}}}
\newcommand{\Bigset}[1]{\@gifnextchar\bgroup{\BIG@@set{#1}}{\BIG@set{#1}}}
\newcommand{\refeq}[1]{(\ref{eq:#1})}
\newcommand{\of}[1]{\left( #1 \right)}
\newcommand{\ofbroken}[1]{\biggl( #1 \biggr)}
\newcommand{\function}[2]{:#1 \rightarrow #2}
\newcommand{\dd}{\!\;\mathrm{d}}
\newcommand{\mean}{\mathsf{E}}
\newcommand{\prob}[1]{\mathsf{P}[ #1 ]}
\newcommand{\sclo}[1]{\overline{#1}}
\begin{document}
\title{New bounds for the optimal density of covering single-insertion codes via the Tur\'an density}
\author{Oleg Pikhurko,
  Oleg Verbitsky, and Maksim Zhukovskii%
  \thanks{This article has been accepted for publication in \emph{IEEE Transactions on Information Theory},
    DOI:10.1109/TIT.2025.3557393.
    The work of Oleg Pikhurko was supported 
by ERC Advanced Grant 101020255. The work of Oleg Verbitsky was supported by DFG grant KO 1053/8--2.}%
\thanks{Oleg Pikhurko is with Mathematics Institute and DIMAP,
University of Warwick, Coventry CV4 7AL, UK.}%
\thanks{Oleg Verbitsky is with Institut f\"ur Informatik,
  Humboldt-Universit\"at zu Berlin, Unter den Linden 6, D-10099 Berlin, on leave from the IAPMM, Lviv, Ukraine.}%
 \thanks{Maksim Zhukovskii is with School of Computer Science, University of Sheffield S1 4DP, UK.}%
}%

\maketitle

\begin{abstract}
  We prove that the density of any covering single-insertion code $C\subseteq X^r$
  over the $n$-symbol alphabet $X$ cannot be smaller than $1/r+\delta_r$
  for some positive real $\delta_r$ not depending on $n$. This improves the volume lower
  bound of $1/(r+1)$. 
  On the other hand, we observe that, for all sufficiently large $r$, if $n$ tends to infinity
  then the asymptotic upper bound of $7/(r+1)$ due to Lenz et al.~(2021) can be improved to $4.911/(r+1)$. 

Both the lower and the upper bounds are achieved by relating the code density to the Tur\'an density from extremal combinatorics. For the last task, we use the analytic framework of measurable subsets of the real cube $[0,1]^r$.
\end{abstract}

\section{Introduction}

Let $r<k$ be positive integers and $X$ be a (not necessarily finite) set.
We say that a sequence $x\in X^r$ \emph{covers} a sequence $a\in X^k$
if $x$ is a subsequence of $a$, i.e., if $x$ is obtainable by removing $k-r$
elements from $a$ (while keeping the ordering of the remaining elements).
We say that a set $C\subseteq X^r$ \emph{covers} a set $A\subseteq X^k$ if
every sequence in $A$ is covered by at least one sequence in~$C$.

\begin{definition}\label{def:}
A set $C\subseteq X^r$ covering $X^k$ is called a \emph{covering $(k-r)$-insertion code}
over $X$. If $X=[n]$, where we denote $[n]=\{0,1,\ldots,n-1\}$,
we speak of a covering code over the $n$-symbol alphabet.
The minimum possible cardinality of such a code will be denoted by $S(n,k,r)$.
\end{definition}

\begin{example}[Grozea \cite{Grozea24}]
 $S(3,4,3)=12$ and the unique, up to renaming the symbols, optimal code
 consists of the sequences $(0,0,0)$, $(1,1,1)$, $(2,2,2)$, $(0,0,1)$, $(0,1,0)$, $(1,0,0)$, 
 $(1,1,2)$, $(1,2,1)$, $(2,1,1)$, $(2,2,0)$, $(2,0,2)$, $(0,2,2)$.
\end{example}

It is not hard to show (see Section \ref{s:prel}) that for each $k$ and $r$,
the optimal density $S(n,k,r)/n^r$ of a code converges to a limit $s(k,r)$
as $n$ increases and that
\begin{equation}
  \label{eq:S-s}
S(n,k,r)/n^r\ge s(k,r)  
\end{equation}
for all $n$. This motivates estimating the limit value $s(k,r)$,
especially because determining the exact values of $S(n,k,r)$
is computationally infeasible even for relatively small parameters $n$, $k$, and $r$ (cf.\ \cite{Grozea24} where the exact values of $S(n,4,3)$ are 
determined for $n\le5$).

We are especially interested in single-insertion codes, that is, in the case of $k=r+1$.
As observed by various researchers (e.g.~\cite[Eq.~(2)]{Levenshtein66} for $n=2$ and \cite[Lemma 4.1]{Viennot83}
for general $n$), every sequence $x\in[n]^r$ covers exactly
$(r+1)(n-1)+1$ sequences in $[n]^{r+1}$, which immediately yields
$$
S(n,r+1,r)\ge\frac{n^{r+1}}{(r+1)(n-1)+1}.  
$$
On the other hand, Lenz et al.~\cite{LenzRSY21} proved that
$$
S(n,r+1,r)\le\frac{7n^{r+1}}{(r+1)(n-1)+1}.  
$$
These estimates readily imply that
\begin{equation}
  \label{eq:Lev-Lenz}
  \frac1{r+1}\le s(r+1,r)\le\frac7{r+1}.
\end{equation}
In the present paper, we aim at improving the lower and the upper bound in~\refeq{Lev-Lenz}.

We begin with addressing the question whether or not the lower bound in \refeq{Lev-Lenz}
is sharp. The equality $s(r+1,r)=1/(r+1)$ would mean the existence of asymptotically
perfect covering single-insertion codes. Our first estimate rules out this possibility by
showing that
\begin{equation}
  \label{eq:1/r}
 s(r+1,r)\ge\frac1r. 
\end{equation}

We prove the lower bound \refeq{1/r} in a natural analytic framework of measurable
covering single-insertion codes over the real segment $[0,1]$.
This framework is useful for establishing a relationship between covering codes
and Tur\'an systems, the classical and actively studied subject in combinatorics \cite{Keevash11,Ruszinko07,Sidorenko95}.
Of crucial importance for us is the concept of the extremal Tur\'an density $t(k,r)$
(see Section \ref{s:Turan} for the definition). We notice that
\begin{equation}
  \label{eq:s-t}
  s(k,r)\le t(k,r)
\end{equation}
and, therefore, any upper bound for $t(k,r)$ yields also an upper bound for $s(k,r)$.
The currently best upper bounds for the Tur\'an density $t(r+1,r)$ have recently been obtained
by Pikhurko \cite{Pikhurko25} who proved that $t(r+1,r)\le6.239/(r+1)$ for all $r$
and $t(r+1,r)\le4.911/(r+1)$ for all sufficiently large $r$. Both of these bounds
imply an improvement of the upper bound in~\refeq{Lev-Lenz}. 
These improvements can also be derived by a more careful analysis of the construction in \cite{LenzRSY21}.\footnote{Using the same argument as in \cite[Lemma~2.3]{Pikhurko25},
it can be showed that \cite[Lemma 8]{LenzRSY21} in fact holds if $\mu_I\le 4.911$ and $q$ is large enough.}
In any case, it is remarkable that the state-of-the-art upper bounds for $s(r+1,r)$
are actually provided by the available upper bounds for~$t(r+1,r)$.

Somewhat surprisingly, we obtain a relation between $s(r+1,r)$ and $t(r+1,r)$
also in the other direction: Any lower bound for $t(r+1,r)$ better than $1/r$
implies a lower bound for $s(r+1,r)$ also better than $1/r$.
Lower bounds $t(r+1,r)\ge1/r+\epsilon_r$ for $\epsilon_r>0$ are obtained
by Chung and Lu \cite{ChungL99} and Lu and Zhao \cite{LuZ09}
and, therefore, our initial lower bound \refeq{1/r} can be further improved
to $s(r+1,r)\ge1/r+\delta_r$ for some $\delta_r>0$. Though we provide
explicit values of $\delta_r$ in the main body of the paper,
right now we prefer to summarize the new bounds for the optimal density of covering single-insertion
codes, improving the current bounds \refeq{Lev-Lenz}, in a somewhat simplified form.

\begin{theorem}\label{thm:intro}
  For $s(k,r)=\lim_{n\to\infty}S(n,k,r)/n^r$,
  $$
\frac1r<s(r+1,r)\le\frac{4.911}{r+1}
$$
where the former inequality is true for all $r$ and the latter inequality is true for all sufficiently large~$r$.
\end{theorem}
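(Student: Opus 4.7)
My plan splits Theorem~\ref{thm:intro} into its two inequalities, both of which I would route through the Tur\'an density $t(r+1,r)$ via the link $s(k,r)\le t(k,r)$ stated in \refeq{s-t}. For the upper bound, I would first verify \refeq{s-t} by a direct construction: from a near-extremal $r$-uniform hypergraph $H$ on $[n]$ whose edges cover every $(r+1)$-subset, I would build a code $C\subseteq[n]^r$ by listing all orderings of all edges (together with a negligible family of sequences containing repeated symbols) and then check that $|C|/n^r$ tends to the edge density of $H$ as $n\to\infty$. Combined with Pikhurko's bound $t(r+1,r)\le 4.911/(r+1)$ valid for all sufficiently large $r$, this immediately yields the claimed upper bound.

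For the lower bound I would pass to the analytic framework of measurable covering single-insertion codes over $[0,1]$: a measurable $\tilde C\subseteq[0,1]^r$ is such that, outside a null set, every $a\in[0,1]^{r+1}$ is covered by some $x\in\tilde C$, and a standard rescaling and limiting argument identifies the infimal Lebesgue measure of such $\tilde C$ with $s(r+1,r)$. The first goal is the clean volume lower bound $\mu(\tilde C)\ge 1/r$. A naive averaging of the indicator inequality $\sum_{i=0}^{r}\mathbf{1}_{\tilde C}(a_{\hat{i}})\ge 1$ gives only $\mu(\tilde C)\ge 1/(r+1)$, so a finer count is needed: for each $x\in\tilde C$ I would carefully compute the shadow volume in $[0,1]^{r+1}$ of the tuples having $x$ as a coordinate-deletion, including the contribution of tuples with nearly-repeated symbols, and then integrate over $x$. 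Accounting for this near-diagonal contribution via a Lebesgue differentiation argument sharpens the volume bound from $1/(r+1)$ to $1/r$.

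The strict improvement over $1/r$ comes from a reverse reduction. I would show that any measurable covering code $\tilde C$ with $\mu(\tilde C)\le 1/r+\eta$ can, after symmetrising over the $r!$ coordinate permutations and discretising, be converted into an $r$-uniform Tur\'an covering on a finite vertex set with edge density at most $\mu(\tilde C)+o(1)$. This yields the contrapositive implication $t(r+1,r)\ge 1/r+\epsilon_r\Rightarrow s(r+1,r)\ge 1/r+\delta_r$ for some explicit $\delta_r>0$, after which the Chung--Lu and Lu--Zhao lower bounds for $t(r+1,r)$ finish the proof.

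The main obstacle is the symmetrisation step in the reverse reduction: naive averaging of $\tilde C$ over coordinate permutations preserves density but risks destroying the covering property, since for a given $a$ the permuted deletions need not hit $\tilde C$. Overcoming this requires either a randomised argument that preserves covering in expectation (followed by selecting a good permutation) or a stability statement asserting that near-extremal codes are already approximately invariant under the symmetric group. Making one of these routes quantitative, and tracking how the Tur\'an gap $\epsilon_r$ transfers into the code gap $\delta_r$, is where the technical weight of the proof sits.
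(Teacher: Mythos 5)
Your route for the upper bound matches the paper's: from a near-optimal Tur\'an $(n,r+1,r)$-system build a covering single-insertion code by taking all orderings of its edges (plus the negligible set of sequences with a repeated symbol), which gives $s(k,r)\le t(k,r)$, and then plug in Pikhurko's bound $t(r+1,r)\le 4.911/(r+1)$ for large $r$. Your outline of the strict improvement over $1/r$ is also on the right track and you identify the genuine obstacle correctly: one must show that near-extremal codes are approximately symmetric, which the paper accomplishes through a sunflower-type stability argument bounding the measure of the symmetric closure of $C$ in terms of how far $\lambda(C)$ exceeds $1/r$.

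Your argument for the clean lower bound $s(r+1,r)\ge 1/r$, however, has a genuine gap. In the analytic setting, for a fixed $x\in[0,1]^r$ the set of $a\in[0,1]^{r+1}$ covered by $x$ is a finite union of one-dimensional segments (one for each insertion position) and so has Lebesgue measure zero; there is no per-codeword ``shadow volume'' to integrate, and the near-diagonal set where two coordinates nearly coincide is itself a null set, so a Lebesgue-differentiation refinement of the union bound cannot produce the factor-of-$(r+1)/r$ gain. (This is consistent with the discrete picture: the exact covering count $(r+1)(n-1)+1$ has its $+1$ correction precisely from repeated symbols, but dividing by $n^{r}$ and letting $n\to\infty$ kills that correction and returns $1/(r+1)$, not $1/r$.) The actual source of the improvement is the mandatory overlap among the lifted sets $C_1,\dots,C_{r+1}\subseteq[0,1]^{r+1}$, where $C_i$ collects the vectors whose $i$-th deletion lies in $C$: by Fubini and Cauchy--Schwarz each pairwise intersection satisfies $\lambda(C_i\cap C_j)\ge\lambda(C)^2$, and an inverse Bonferroni inequality along a spanning tree on $\{1,\dots,r+1\}$ gives
$$
1=\lambda\Bigl(\bigcup_{i=1}^{r+1}C_i\Bigr)\le (r+1)\lambda(C)-r\lambda(C)^2,
$$
which factors as $(1-\lambda(C))(r\lambda(C)-1)\ge 0$, hence $\lambda(C)\ge 1/r$. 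Without replacing the near-diagonal heuristic with this overlap argument, your plan stalls at the volume bound $1/(r+1)$.
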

Taking into account the inequality \refeq{S-s}, note that the lower bound stated in
Theorem \ref{thm:intro} is not just asymptotic, as it yields a lower bound
$S(n,k,r)/n^r\ge1/r+\delta_r$ for some real $\delta_r$ not depending on $n$
(an explicit value of $\delta_r$ will be specified in the sequel).
Covering codes over large alphabets naturally arise in research driven by applications in computational biology and genomics. Notably, DNA and RNA sequences are constructed from five canonical nucleobases: A, C, G, T, and U. Furthermore, the genetic code of life involves 22 proteinogenic amino acids, while over 500 amino acids are known to occur in nature. For results concerning the Hamming metric, we refer to \cite{KeriO05}, while results for the Levenshtein metric (which can be described using insertion/deletion codes) can be found in \cite{BhardwajPRS21}. Covering insertion/deletion codes over arbitrarily large alphabets have also been studied in \cite{AfratiSMPU12,AfratiSRRSU14} in the context of the MapReduce framework for data analytics.

The paper is organized as follows. The convergence of the optimal code density $S(n,k,r)/n^r$
to a limit $s(k,r)$ is showed in Section \ref{s:prel}. An analytic framework for
estimation of $s(k,r)$ is suggested in Section \ref{s:analytic}.
Our first lower bound \refeq{1/r} is established in Section \ref{s:lower}.
In Section \ref{s:Turan} we introduce Tur\'an systems and prove the relation \refeq{s-t},
thereby obtaining the upper bound in Theorem \ref{thm:intro} (restated as Corollary \ref{cor:OP}).
A reverse relation between $s(r+1,r)$ and $t(r+1,r)$ is proved in Section \ref{s:further}
as Theorem \ref{thm:tr-sr} and Corollary \ref{cor:s-t-lower}, which allows us to
improve \refeq{1/r} to a strict inequality stated in Theorem \ref{thm:intro} in a simplified form
and made more precise in Corollaries \ref{cor:odd} and~\ref{cor:s43}.
Note that the proof of Theorem \ref{thm:tr-sr} is heavily based on the argument
used in Section \ref{s:lower} for obtaining the bound in~\refeq{1/r}.

\section{Preliminary lemmas}\label{s:prel}

Given a function $f\function YX$, we define a function $f^r\function{Y^r}{X^r}$
by $f^r(y_1,\ldots,y_r)=(f(y_1),\ldots,f(y_r))$. The preimage of a set $C\subseteq X^r$ under $f^r$
will be denoted by $f^{-r}(C)$. 

\begin{lemma}\label{lem:f}
If $C\subseteq X^r$ covers $X^k$ and $f$ is an arbitrary function from $Y$ to $X$, 
then $f^{-r}(C)$ covers $Y^k$.  
\end{lemma}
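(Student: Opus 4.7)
The statement is an almost definitional compatibility between the covering relation and coordinatewise pullback, and I would prove it by directly unpacking the definitions rather than invoking any outside machinery.

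My plan is as follows. Fix an arbitrary sequence $a=(a_1,\ldots,a_k)\in Y^k$; the goal is to produce some $y\in f^{-r}(C)$ that covers $a$. The only object we can sensibly feed into the hypothesis on $C$ is the image sequence $f^k(a)=(f(a_1),\ldots,f(a_k))\in X^k$, so I would apply the covering property of $C$ to it: there exist indices $1\le i_1<\cdots<i_r\le k$ and a sequence $x=(x_1,\ldots,x_r)\in C$ such that $x_j=f(a_{i_j})$ for every $j$.

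The natural candidate for the covering word in $f^{-r}(C)$ is then $y=(a_{i_1},\ldots,a_{i_r})\in Y^r$, obtained by selecting the same positions $i_1<\cdots<i_r$ from $a$. By construction $y$ is a subsequence of $a$ in the sense of the paper's definition of ``covers,'' and
$$
f^r(y)=\of{f(a_{i_1}),\ldots,f(a_{i_r})}=(x_1,\ldots,x_r)=x\in C,
$$
so $y\in f^{-r}(C)$. Since $a$ was arbitrary, $f^{-r}(C)$ covers $Y^k$.

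There is essentially no obstacle here: the argument is a one-line diagram chase that relies only on the fact that ``being a subsequence'' is preserved under coordinatewise application of any function, combined with the definition of preimage under $f^r$. The only thing to be careful about is bookkeeping the index set $\{i_1,\ldots,i_r\}$ consistently in $X^k$ and in $Y^k$, which the notation above makes transparent.
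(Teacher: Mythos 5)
Your proof is correct and follows the same direct diagram-chase as the paper's own argument: apply $f^k$ to the target sequence, use the covering property of $C$ to pick a subsequence in $C$, and pull back along the same index set. The only cosmetic difference is that the paper writes ``let, say, $(x_1,\ldots,x_r)\in C$'' in place of your explicit indices $i_1<\cdots<i_r$.
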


\begin{proof}
Consider an arbitrary $(y_1,\ldots,y_k)\in Y^k$ and denote $(x_1,\ldots,x_k)=f^k(y_1,\ldots,y_k)$.
Since $C$ covers $X^k$, some $r$-dimensional projection of $(x_1,\ldots,x_k)$
belongs to $C$. Let, say, $(x_1,\ldots,x_r)\in C$. It remains to note that $(y_1,\ldots,y_k)$
is covered by the vector $(y_1,\ldots,y_r)$ in $f^{-r}(C)$.
\end{proof}

\begin{lemma}\label{lem:s}
  For all positive integers $k>r$, the optimal code density $S(n,k,r)/n^r$ converges to a limit $s(k,r)$
  as $n$ grows, and $S(n,k,r)/n^r\ge s(k,r)$ for all~$n$.
\end{lemma}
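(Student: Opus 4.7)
The plan is to use Lemma \ref{lem:f} to establish that the sequence $a_n := S(n,k,r)/n^r$ is essentially non-increasing (up to a factor tending to $1$) as $n$ grows, and then to deduce convergence to the infimum by a Fekete-style argument.

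First, I would relate $S(N,k,r)$ to $S(n,k,r)$ whenever $n \le N$. Choose any surjection $f\function{[N]}{[n]}$ whose preimages have sizes $\lfloor N/n\rfloor$ or $\lceil N/n\rceil$; such an $f$ exists and is routine to construct. Take an optimal covering code $C\subseteq[n]^r$ with $|C|=S(n,k,r)$. By Lemma \ref{lem:f}, $f^{-r}(C)$ covers $[N]^k$, so $S(N,k,r) \le |f^{-r}(C)|$. A direct count gives
$$
|f^{-r}(C)| \;=\; \sum_{(x_1,\ldots,x_r)\in C} |f^{-1}(x_1)|\cdots|f^{-1}(x_r)| \;\le\; \lceil N/n\rceil^{\,r}\, S(n,k,r).
$$

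Dividing by $N^r$ yields
$$
\frac{S(N,k,r)}{N^r} \;\le\; \left(\frac{\lceil N/n\rceil}{N/n}\right)^{\!r} \frac{S(n,k,r)}{n^r}.
$$
For each fixed $n$, the factor $(\lceil N/n\rceil/(N/n))^r$ tends to $1$ as $N\to\infty$. Taking $\limsup_{N\to\infty}$ therefore gives $\limsup_{N\to\infty} a_N \le a_n$ for every~$n$.

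Since this holds for every $n$, we obtain $\limsup_{N\to\infty} a_N \le \inf_n a_n \le \liminf_{N\to\infty} a_N$, and thus the sequence $a_n$ converges to $s(k,r):=\inf_n a_n$. Because the limit equals the infimum, $a_n\ge s(k,r)$ for every $n$, which is the asserted inequality \refeq{S-s}.

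The only mildly delicate point is the rounding step when $n \nmid N$; this is handled by the uniform bound $\lceil N/n\rceil^r$, which is why the conclusion takes the form of a $\limsup$ inequality rather than termwise monotonicity. Otherwise the argument is routine once Lemma \ref{lem:f} is available.
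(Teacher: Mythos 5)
Your proof is correct and takes essentially the same route as the paper: apply Lemma~\ref{lem:f} to a near-uniform surjection $[N]\to[n]$ (the paper uses $y\mapsto y\bmod n$) to get $S(N,k,r)/N^r\le(1+o(1))\,S(n,k,r)/n^r$, then conclude $\limsup_N a_N\le\inf_n a_n\le\liminf_N a_N$. The only cosmetic difference is that you bound the preimage sizes by $\lceil N/n\rceil$ while the paper uses the marginally cruder $\lfloor N/n\rfloor+1$.
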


\begin{proof}
Let $C\subseteq [n]^r$ be an optimal covering $(k-r)$-insertion code, that is, $|C|=S(n,k,r)$.
Let $m>n$ and define $f\function{[m]}{[n]}$ by $f(y)=y\bmod n$ for all $y\in[m]$.
By Lemma \ref{lem:f}, the preimage $f^{-r}(C)$ is a covering $(k-r)$-insertion code
over $[m]$. Let $q=\lfloor m/n \rfloor$. Thus,
$$
S(m,k,r)\le|f^{-r}(C)|\le(q+1)^r\,|C|.
$$
It follows that
$$
\limsup_{m\to\infty}\frac{S(m,k,r)}{m^r}\le\frac{S(n,k,r)}{n^r},
$$
implying both statements in the lemma.
\end{proof}

\section{Analytic reformulation}\label{s:analytic}

Definition \ref{def:} admits consideration of an infinitary setting.
In order to be able to speak about the size of a code, we suppose that
$X$ is a measurable space endowed with a probability measure $\lambda$.
The Cartesian power $X^r$ is endowed with the product measure,
which for brevity will be denoted also by $\lambda$. 
We define $s(X,k,r)=\inf_C\lambda(C)$ where the infimum is taken over all measurable $(k-r)$-insertion covering codes $C\subseteq X^r$.
In the discrete case, we endow $X=[n]$ with the uniform probability measure,
getting $s([n],k,r)=S(n,k,r)/{n^r}$, which is just another notation
for the optimal code density over a finite alphabet.
For the unit segment of reals $X=[0,1]$, let $\lambda$ be the Lebesgue measure.
In this case,
\begin{equation}
  \label{eq:s}
s([0,1],k,r)=\inf_{C}\lambda(C)  
\end{equation}
where the infimum is taken over Lebesgue measurable $(k-r)$-insertion covering codes $C\subseteq [0,1]^r$.

\begin{theorem}\label{thm:s-limit}
$s(k,r)=s([0,1],k,r)$.
\end{theorem}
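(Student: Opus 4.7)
The plan is to prove the two inequalities $s([0,1],k,r)\le s(k,r)$ and $s(k,r)\le s([0,1],k,r)$ separately, each via an application of Lemma~\ref{lem:f} preceded by a short approximation step.

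For $s([0,1],k,r)\le s(k,r)$ I would take, for each $n$, an optimal discrete code $C\subseteq[n]^r$ and the measure-preserving map $f_n\function{[0,1]}{[n]}$ given by $f_n(y)=\lfloor ny\rfloor$. Lemma~\ref{lem:f} yields that $f_n^{-r}(C)\subseteq[0,1]^r$ is a measurable covering code, and a direct computation gives $\lambda(f_n^{-r}(C))=|C|/n^r=S(n,k,r)/n^r$. Hence $s([0,1],k,r)\le S(n,k,r)/n^r$ for every $n$, and letting $n\to\infty$ via Lemma~\ref{lem:s} finishes this direction.

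For the reverse inequality, fix a measurable covering code $C\subseteq[0,1]^r$ and $\varepsilon>0$; the aim is to replace $C$ by a combinatorial approximation aligned to a rational grid. By outer regularity of Lebesgue measure, pick an open $U\supseteq C$ with $\lambda(U)\le\lambda(C)+\varepsilon$; since $C\subseteq U$, the open set $U$ still covers $[0,1]^k$. Writing $U$ as a union of open boxes with rational endpoints, the preimages of those boxes under all $\binom{k}{r}$ projections $[0,1]^k\to[0,1]^r$ form an open cover of $[0,1]^k$; compactness of $[0,1]^k$ extracts a finite subfamily, and the corresponding finite union $U'\subseteq U$ of rational boxes still covers $[0,1]^k$. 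Taking $N$ to be a common denominator of all endpoints and letting $C'$ be the union of the closures of the boxes in $U'$, we obtain a set that is a union of closed $(1/N)$-grid boxes in $[0,1]^r$, still covers $[0,1]^k$, and satisfies $\lambda(C')=\lambda(U')\le\lambda(C)+\varepsilon$.

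The last step is to pass from $C'$ to a discrete covering code $\widehat{C'}\subseteq[N]^r$ consisting of those indices $i$ whose grid box is contained in $C'$. For every $b\in[N]^k$, applying the covering property of $C'$ to a point in the interior of the grid box indexed by $b$ yields that some $r$-dimensional projection of $b$ belongs to $\widehat{C'}$; hence $\widehat{C'}$ covers $[N]^k$, and clearly $|\widehat{C'}|/N^r=\lambda(C')$. Combining these facts with Lemma~\ref{lem:s} gives
$$
s(k,r)\le \frac{S(N,k,r)}{N^r}\le \frac{|\widehat{C'}|}{N^r}=\lambda(C')\le\lambda(C)+\varepsilon,
$$
and sending $\varepsilon\to0$ followed by the infimum over $C$ closes the proof. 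The one technical delicacy is the passage to $\widehat{C'}$: using interior points of grid boxes is essential so that the projected image lands unambiguously in a single grid box of $C'$, thereby ruling out pathologies coming from the zero-measure boundary overlaps of the closed grid boxes.
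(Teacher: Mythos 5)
Your first inequality ($s([0,1],k,r)\le s(k,r)$, via pulling back an optimal discrete code through a measure-preserving $f_n$) is essentially identical to the paper's. Your second inequality, however, is handled by a genuinely different argument. The paper takes a uniformly random sample $y_0,\dots,y_{n-1}\in[0,1]$, forms the random discrete code $C_n(y)=\{\rho\in[n]^r:(y_{\rho(0)},\dots,y_{\rho(r-1)})\in C\}$, verifies it covers $[n]^k$, and bounds $\mathsf{E}|C_n(y)|$ by $n(n-1)\cdots(n-r+1)\,\lambda(C)$ plus a lower-order term for the non-injective $\rho$; an averaging argument then gives $S(n,k,r)/n^r\le\lambda(C)+o(1)$. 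You instead proceed deterministically: outer regularity replaces $C$ by a slightly larger open $U$, compactness of $[0,1]^k$ (applied to the open cover $\{\pi_I^{-1}(B)\}$ over $r$-projections $\pi_I$ and rational basis boxes $B\subseteq U$) yields a finite union $U'$ of rational boxes with a common grid mesh $1/N$, and reading off which closed grid boxes lie in $C'=\overline{U'}$ gives an explicit discrete covering code of the right density. Both arguments are valid; the paper's probabilistic route is shorter and avoids topological bookkeeping, while yours is more constructive and sidesteps probability entirely, at the cost of carefully handling grid-boundary null sets (which you do correctly by evaluating the covering property at interior points of grid boxes, so that a projected point landing in $C'$ forces its whole grid box into $C'$). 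The argument is correct; the only cosmetic quibble is that $f_n(y)=\lfloor ny\rfloor$ sends $y=1$ to $n\notin[n]$, a measure-zero defect easily fixed by redefining $f_n(1)=n-1$.
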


\begin{proof}
  We first prove that $s([0,1],k,r)\le s(k,r)$.
  Let $C\subseteq[n]^r$ be an optimal covering $(k-r)$-insertion code, that is, $|C|=S(n,k,r)$.
  Define $f_n\function{[0,1]}{[n]}$ by $f_n(0)=0$ and $f_n(x)=i$ for all $x\in(\frac in,\frac{i+1}n]$.
  By Lemma \ref{lem:f}, the preimage $f_n^{-r}(C)\subseteq[0,1]^r$
  is a covering $(k-r)$-insertion code over $[0,1]$.
 Note that $f_n$ is a measurable function, and $\lambda(f_n^{-r}(C))=|C|/n^r$.
 This shows that
 $$
 s([0,1],k,r)\le\lambda\of{f_n^{-r}(C)}=\frac{|C|}{n^r}=s([n],k,r)
 $$
 for all $k>r$, implying the required inequality.

 In order to prove that $s(k,r)\le s([0,1],k,r)$, we use the following convention.
 A mapping $\tau\function{[t]}{Z}$ can be identified with the sequence $(\tau(0),\tau(1),\ldots,\tau(t-1))\in Z^t$.
 After this, it makes sense to say, for example, that $\rho\function{[r]}{Z}$
 covers $\kappa\function{[k]}{Z}$.
 
 Let $C\subseteq [0,1]^r$ be a covering $(k-r)$-insertion code over $[0,1]$.
 Given a sequence $y=(y_0,y_1,\dots,y_{n-1})$ in $[0,1]^n$, we define $C_n=C_n(y)$ as the set
 of all mappings $\rho\function{[r]}{[n]}$ such that $(y_{\rho(0)},y_{\rho(1)},\ldots,y_{\rho(r-1)})\in C$.
 According to our convention, we view $C_n$ as a subset of $[n]^r$ and claim
 that $C_n$ covers $[n]^k$, i.e., that it is a covering $(k-r)$-insertion code over $[n]$.
 Indeed, take an arbitrary mapping $\kappa\function{[k]}{[n]}$. Since $C$ covers $[0,1]^k$,
 the sequence $(y_{\kappa(0)},y_{\kappa(1)},\ldots,y_{\kappa(k-1)})$ is covered by some subsequence
 $(y_{\kappa(i_0)},\ldots,y_{\kappa(i_{r-1})})\in C$ where $0\le i_0<i_1<\dots<i_{r-1}<k$.
 Define  $\rho\function{[r]}{[n]}$ by setting $\rho(0)=\kappa(i_0),\dots,\rho(r-1)=\kappa(i_{r-1})$
 and note that $\rho\in C_n$ covers~$\kappa$.

 It follows that $S(n,k,r)\le|C_n(y)|$ for every $y\in [0,1]^n$.
 This implies that if we take $y$ uniformly at random in $[0,1]^n$ or,
 equivalently, if we take independent random variables $y_0,\dots,y_{n-1}$
 uniformly distributed in $[0,1]$, then $S(n,k,r)$ does not exceed the expectation $\mean|C_n(y)|$,
 which by linearity is equal to the sum $\sum_{\rho}\prob{(y_{\rho(0)},\ldots,y_{\rho(r-1)})\in C}$
 of probabilities over all maps $\rho\function{[r]}{[n]}$.
If $\rho$ is injective, then $(y_{\rho(0)},\ldots,y_{\rho(r-1)})\in C$ with probability $\lambda(C)$
and, therefore,
\begin{multline*}
S(n,k,r)\le n(n-1)\cdots(n-r+1)\lambda(C)\\+\of{n^r-n(n-1)\cdots(n-r+1)}.
\end{multline*}
Using Lemma \ref{lem:s}, we derive from here
$$
s(k,r)\le\frac{S(n,k,r)}{n^r}\le\lambda(C)+o(1).
$$
As $C$ can be chosen with $\lambda(C)$ arbitrarily close to $s([0,1],k,r)$,
this proves the inequality $s(k,r)\le s([0,1],k,r)$.
\end{proof}

While some of the forthcoming proofs use the analytic setting of a measurable subset $X\subseteq [0,1]^r$, they can also be easily re-written to work directly with subsets of $[n]^r$. The choice of which language to use is just the matter of convenience.

\section{A better lower bound for $s(r+1,r)$}\label{s:lower}

We now improve the lower bound in \refeq{Lev-Lenz}.

\begin{theorem}\label{thm:1/r}
$s(r+1,r)\ge1/r$.
\end{theorem}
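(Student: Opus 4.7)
The plan is to work in the analytic framework provided by Theorem~\ref{thm:s-limit}: let $C \subseteq [0,1]^r$ be a measurable covering single-insertion code, write $c = \lambda(C)$, and aim to prove $c \ge 1/r$. I will refine the naive volume bound $(r+1)c \ge 1$ (which comes from $[0,1]^{r+1} \subseteq \bigcup_{i=0}^{r} \pi_i^{-1}(C)$) by extracting from the covering property an entire family of pointwise inequalities on the complement $\bar C = [0,1]^r \setminus C$, summing them, and finishing with Cauchy--Schwarz.

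For each position $\ell \in \{0,\ldots,r-1\}$ of $C$ I would introduce the ``link'' function
\[
h_\ell(y) \ := \ \lambda\bigl(\{x \in [0,1] : (y_0,\ldots,y_{\ell-1},x,y_\ell,\ldots,y_{r-2}) \in C\}\bigr)
\]
for $y \in [0,1]^{r-1}$, so that $\int h_\ell \, dy = c$ for every $\ell$ by Fubini. For each $i \in \{0,\ldots,r\}$, fix $\pi_i(a) =: b \in \bar C$ in an $(r+1)$-sequence $a$ and let $a_i$ vary. The covering condition forces a.e.\ $a_i$ to be captured by some $\pi_j(a) \in C$ with $j \ne i$. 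Classifying each such event by whether $j < i$ (in which case $a_i$ ends up in slot $i-1$ of the remaining $r$-tuple) or $j > i$ (slot $i$), and summing the corresponding slice measures, I would obtain
\[
\Phi_i(b) \ := \ \sum_{k=0}^{i-1} h_{i-1}(\pi_k b) \ + \ \sum_{k=i}^{r-1} h_i(\pi_k b) \ \ge \ 1 \qquad \text{for a.e. } b \in \bar C,
\]
where $\pi_k$ now denotes deletion of the $k$-th coordinate of an $r$-tuple.

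Summing the $\Phi_i \ge 1$ over $i \in \{0,\ldots,r\}$, a bookkeeping check shows that each $h_\ell(\pi_k b)$ picks up coefficient $1$ when $\ell \ne k$ and coefficient $2$ when $\ell = k$, giving $\sum_{\ell,k=0}^{r-1} h_\ell(\pi_k b) + \sum_{\ell=0}^{r-1} h_\ell(\pi_\ell b) \ge r+1$ a.e.\ on $\bar C$. I would then integrate this over $\bar C$ using the identity
\[
\int_{\bar C} h_\ell(\pi_k b)\, db \ = \ \int h_\ell(y)(1 - h_k(y))\, dy \ = \ c - \int h_\ell\, h_k\, dy,
\]
obtained by splitting $b$ into the tuple $\pi_k b$ and its free coordinate. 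After collecting terms this collapses to
\[
(r+1)^2 c - (r+1) \ \ge \ \int \Bigl[\bigl(\textstyle\sum_\ell h_\ell\bigr)^2 + \sum_\ell h_\ell^2\Bigr] dy.
\]
Cauchy--Schwarz gives $\int(\sum_\ell h_\ell)^2 \ge (rc)^2$ and $\int \sum_\ell h_\ell^2 \ge r c^2$, so the right-hand side is at least $r(r+1)c^2$. Dividing by $r+1$ yields $rc^2 - (r+1)c + 1 \le 0$, i.e.\ $(rc-1)(c-1) \le 0$; since $c \le 1$, this forces $c \ge 1/r$.

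The main obstacle I expect is the careful derivation of the pointwise inequality $\Phi_i(b) \ge 1$ for the ``middle'' indices $1 \le i \le r-1$: one must correctly keep track, for each $j \ne i$, of which slice of $C$ the varying coordinate $a_i$ ends up as the free variable of, and verify that these are precisely the slices $h_{i-1}$ (when $j < i$) and $h_i$ (when $j > i$), evaluated at the $(r-1)$-tuples $\pi_k b$ under the right re-indexing. Once this combinatorial bookkeeping is settled, the remainder---summing over $i$, Fubini-style integration of each $h_\ell(\pi_k b)$ against $\chi_{\bar C}$, and the final Cauchy--Schwarz---is routine.
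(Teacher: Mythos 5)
Your argument is correct and reaches the same quadratic inequality $rc^2 - (r+1)c + 1 \le 0$ as the paper, so the conclusion $c \ge 1/r$ follows identically; but the route is organized differently. The paper works with the extension sets $C_1,\dots,C_{r+1}\subseteq [0,1]^{r+1}$ (where $C_i$ consists of the $(r+1)$-tuples that land in $C$ after deleting the $i$th coordinate) and first proves a clean standalone tool, an ``inverse Bonferroni inequality'': for any tree $T$ on $\{1,\dots,r+1\}$, $\lambda(\bigcup_i C_i)\le\sum_i\lambda(C_i)-\sum_{e\in E(T)}\lambda(C_e)$, shown by counting atomic sets of the Boolean algebra generated by the $C_i$. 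Taking one spanning tree and bounding each $\lambda(C_{i,j})\ge c^2$ via Fubini and Cauchy--Schwarz gives $rc^2\le(r+1)c-1$ in one stroke. Your derivation replaces that lemma by a pointwise union bound on the complement $\bar C$; if you unwind $\int_{\bar C}\Phi_i(b)\,db$ you find it equals $\sum_{j\ne i}\lambda(C_j\setminus C_i)$, so $\Phi_i\ge 1$ on $\bar C$ integrates to exactly $\sum_{j\ne i}\lambda(C_{i,j})\le(r+1)c-1$, i.e.\ the inverse Bonferroni inequality for the star rooted at $i$. You then sum over all $r+1$ stars and apply Cauchy--Schwarz to all $\binom{r+1}{2}$ pairs rather than to $r$ tree edges, which collapses to the same quadratic. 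The bookkeeping with the link functions $h_\ell$ and the slot indices is heavier, but you avoid proving any auxiliary lemma; the paper's version is slicker, and its lemma is reused (with a star) in the proof of Theorem~\ref{thm:tr-sr}.
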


\noindent
Recall that Theorem \ref{thm:s-limit} allows us to switch to the analytic setting, where we
have to prove that $s([0,1],r+1,r)\ge1/r$. The proof is based on two lemmas below.

Let $C_1,\ldots,C_k$ be measurable sets in a space $\Omega$ with probability measure $\lambda$.
For two indices $i$ and $j$ such that $i<j$, let $C_{i,j}=C_i\cap C_j$.
By Bonferroni's inequality,
$$
\lambda\of{\bigcup_{i=1}^kC_i}\ge\sum_{i=1}^k\lambda(C_i)-\sum_{1\le i<j\le k}\lambda(C_{i,j}).
$$
We, however, need an inequality in the opposite direction.

\begin{lemma}[An inverse Bonferroni's inequality]\label{lem:anti-bonf}
Let $T$ be a tree with vertex set $V(T)=\{1,2,\ldots,k\}$ and edge set $E(T)$. Then
\begin{equation}
  \label{eq:anti-bonf}
\lambda\of{\bigcup_{i=1}^kC_i}\le\sum_{i=1}^k\lambda(C_i)-\sum_{e\in E(T)}\lambda(C_e).  
\end{equation}
\end{lemma}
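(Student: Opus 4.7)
The plan is to prove \refeq{anti-bonf} by induction on the number of vertices $k=|V(T)|$. The base case $k=1$ is immediate (the vertex sum is just $\lambda(C_1)$ and $E(T)$ is empty), and the case $k=2$ is exactly the two-set inclusion-exclusion formula, which in fact gives equality. This already hints that the tree structure is the natural one: a tree on $k$ vertices has $k-1$ edges, and an inductive step that adds one new vertex should add exactly one pairwise intersection to subtract.

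For the inductive step I would pick a leaf $v$ of $T$, let $u$ be its unique neighbor, and set $T'=T-v$ and $A=\bigcup_{i\ne v}C_i$. Applying the standard two-set inclusion-exclusion to $A$ and $C_v$ yields
$$
\lambda(A\cup C_v)=\lambda(A)+\lambda(C_v)-\lambda(A\cap C_v).
$$
The inductive hypothesis applied to $\{C_i\}_{i\ne v}$ and the tree $T'$ on $k-1$ vertices gives
$$
\lambda(A)\le\sum_{i\ne v}\lambda(C_i)-\sum_{e\in E(T')}\lambda(C_e),
$$
while monotonicity of $\lambda$ together with the trivial inclusion $C_u\subseteq A$ yields
$$
\lambda(A\cap C_v)\ge\lambda(C_u\cap C_v)=\lambda(C_{\{u,v\}}).
$$
Substituting both estimates into the two-set formula and using $E(T)=E(T')\cup\{\{u,v\}\}$ produces the desired inequality \refeq{anti-bonf}.

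I do not expect any genuine obstacle; the argument is a standard leaf-deletion induction, and the only point requiring attention is the bookkeeping, namely that deleting a leaf removes exactly one edge of $T$, which matches the single new pairwise intersection that appears when passing from $\lambda(A)$ to $\lambda(A\cup C_v)$. If one tried to replace the tree $T$ by a general spanning connected subgraph with cycles, the extra non-tree edges would not contribute anything useful to the bound on $\lambda(A\cap C_v)$ produced by a single inclusion-exclusion step, which is why the statement is formulated in terms of trees and why leaf-induction is the right mechanism here.
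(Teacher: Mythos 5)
Your proof is correct, and it takes a genuinely different route from the paper's. The paper gives a non-inductive, global argument: it decomposes $\bigcup_i C_i$ into atoms of the Boolean algebra generated by $C_1,\dots,C_k$, lets $t(A)$ count the number of $C_i$ containing a given atom $A$, and observes that $A$ lies in at most $t(A)-1$ of the sets $C_e$, $e\in E(T)$, because $t(A)$ vertices span at most $t(A)-1$ tree edges; summing $\lambda(A)$ against these multiplicities gives the inequality in one stroke. Your argument is a leaf-deletion induction, whose only nontrivial step is the monotonicity bound $\lambda(A\cap C_v)\ge\lambda(C_u\cap C_v)$, valid because $C_u\subseteq A=\bigcup_{i\ne v}C_i$. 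Both are short and clean. What the paper's atomic-set framing buys is that it makes the slack in the inequality explicit (each atom of multiplicity $t$ that spans fewer than $t-1$ tree edges contributes to the gap), and this is exploited directly later in the paper when the lemma is sharpened for stars $T_j$ by adding a correction term $\lambda(R_j)$, verified by ``looking at the contribution of each atomic set.'' Your induction proves the lemma as stated just as well, but would need to be rethought to yield that refinement. Conversely, your version is arguably more elementary and self-contained, needing only the two-set inclusion-exclusion identity and the existence of a leaf.
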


\begin{proof}
Note that $\lambda\of{\bigcup_{i=1}^kC_i}=\sum_A\lambda(A)$, where the sum is over all atomic sets $A$
in the Boolean algebra generated by the subsets $C_1,\ldots,C_k$ of $\Omega$, apart $
A=\Omega\setminus \bigcup_{i=1}^kC_i$.
Consider a particular atomic set $A$ and let $t(A)$ denote the number of sets $C_i$ including $A$ as a subset. If $t(A)>0$, then $A$ is included in at most $t(A)-1$ of the sets $C_e$ with $e\in E(T)$. This is true because a set of $t$ vertices spans the subgraph of the tree $T$ with at most $t-1$ edges. It follows that
$$\sum_{e\in E(T)} \lambda(C_e) \leq  \sum_{A:\,t(A)>0} (t(A)-1)\cdot \lambda(A).$$
We conclude that
$$
\sum_{e\in E(T)}\lambda(C_e)+\lambda\of{\bigcup_{i=1}^kC_i}\le
\sum_A  t(A) \cdot \lambda(A) = \sum_{i=1}^k\lambda(C_i),
$$
completing the proof.
\end{proof}

Let $X=[0,1]$ and $\lambda$ be the Lebesgue measure on $X$.
We write $\lambda$ also to denote the corresponding product measure on $X^{r+1}$.

Let $C\subseteq X^r$. For $i\le r+1$, we define $C_i\subseteq X^{r+1}$
as the set of all sequences $x$ in $X^{r+1}$ such that the subsequence obtained
by removing the $i$-th element of $x$ belongs to $C$.
Theorem \ref{thm:1/r} immediately follows from the next lemma.

\begin{lemma}\label{lem:1/r}
  If $C\subseteq X^r$ covers $X^{r+1}$, i.e., $\bigcup_{i=1}^{r+1}C_i=X^{r+1}$, then $\lambda(C)\ge1/r$.
\end{lemma}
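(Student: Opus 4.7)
The plan is to combine Lemma~\ref{lem:anti-bonf} with a lower bound on each pairwise intersection measure obtained from Cauchy--Schwarz, and then solve a one-variable quadratic inequality in $\lambda(C)$. The starting point is that $\lambda(C_i)=\lambda(C)$ for every $i$: by Fubini, $C_i$ is the preimage of $C$ under the map that deletes the $i$-th coordinate, and this map pushes the product Lebesgue measure on $X^{r+1}$ forward to the product Lebesgue measure on $X^r$. Since $\bigcup_{i=1}^{r+1}C_i=X^{r+1}$ has measure $1$, applying Lemma~\ref{lem:anti-bonf} to any spanning tree $T$ on $\{1,\dots,r+1\}$ gives
$$
1 \;\le\; (r+1)\,\lambda(C)\;-\;\sum_{e\in E(T)}\lambda(C_e).
$$
The strategy is now to choose $T$ so that $\sum_{e\in E(T)}\lambda(C_e)\ge r\,\lambda(C)^2$. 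Together these two estimates yield $r\,\lambda(C)^2-(r+1)\lambda(C)+1\le 0$, which factors as $(r\lambda(C)-1)(\lambda(C)-1)\le 0$. Since $\lambda(C)\le 1$ trivially, we are forced into $\lambda(C)\ge 1/r$, as desired.

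The natural choice for $T$ is the path $1\text{--}2\text{--}\cdots\text{--}(r+1)$, whose $r$ edges are the consecutive pairs $\{i,i+1\}$. The point is that for such pairs, the two projections that appear in the definitions of $C_i$ and $C_{i+1}$ differ in exactly one coordinate. Concretely, writing a point of $X^{r+1}$ as $(u,a,b,v)$ with $u\in X^{i-1}$ the prefix before position $i$, $a,b\in X$ in positions $i$ and $i+1$, and $v\in X^{r-i}$ the suffix after position $i+1$, the event $y\in C_i$ (remove $a$) becomes $(u,b,v)\in C$, while $y\in C_{i+1}$ (remove $b$) becomes $(u,a,v)\in C$. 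Setting
$$
g_i(u,v)=\int_0^1\mathbf{1}_C(u,t,v)\dd t,
$$
Fubini yields $\lambda(C_i\cap C_{i+1})=\int g_i(u,v)^2\dd u\dd v$ and $\lambda(C)=\int g_i(u,v)\dd u\dd v$. Since the $(u,v)$-domain has unit measure, Cauchy--Schwarz (equivalently, Jensen's inequality for $x\mapsto x^2$) gives $\int g_i^2\ge\of{\int g_i}^2=\lambda(C)^2$. Summing the $r$ estimates $\lambda(C_i\cap C_{i+1})\ge\lambda(C)^2$ over $i=1,\dots,r$ closes the argument.

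The main obstacle is really the pairing of tools: the inverse Bonferroni inequality alone, or Cauchy--Schwarz alone, yields nothing; it is the interplay between a \emph{lower} bound on pairwise intersections and the \emph{upper} bound on the union measure from Lemma~\ref{lem:anti-bonf} that converts into the quadratic whose roots pin down $\lambda(C)$. A related subtlety is that the path is essentially the only tree for which this works: for a non-consecutive edge $\{i,j\}$ with $j>i+1$, the two defining projections no longer coincide outside a single coordinate (the coordinates $y_i$ and $y_j$ sit at different positions in the respective truncations), so the clean bound $\lambda(C_i\cap C_j)\ge\lambda(C)^2$ is not immediately available. Fortunately, the path is already a spanning tree with the correct number $r$ of edges.
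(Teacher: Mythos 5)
Your proof is correct and follows essentially the same route as the paper: the inverse Bonferroni inequality (Lemma~\ref{lem:anti-bonf}) paired with the Fubini--Cauchy--Schwarz bound $\lambda(C_i\cap C_{i+1})\ge\lambda(C)^2$, then the factorisation $(r\lambda(C)-1)(\lambda(C)-1)\le 0$. Your insistence on the path tree is in fact a useful sharpening of the paper's wording: the paper says ``for any fixed tree $T$'' and that each $\lambda(C_e)$ is ``estimated similarly'' to $\lambda(C_{1,2})$, but as you rightly point out the Cauchy--Schwarz step requires the two deletion maps to agree off a single coordinate, which forces $e$ to be a consecutive pair, so the path is the natural spanning tree that makes the argument go through cleanly.
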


\begin{proof}
Applying Lemma \ref{lem:anti-bonf} to $C_1,\dots,C_{r+1}\subseteq [0,1]^{r+1}$ for any fixed tree $T$,
we readily obtain
$$
\sum_{e\in E(T)}\lambda(C_e)\le \sum_{i=1}^{r+1} \lambda(C_i) -\lambda\left(\bigcup_{i=1}^{r+1}C_i\right)= (r+1)\lambda(C)-1.
$$
We now estimate the left-hand side from below.
Denote the characteristic function of $C$ by $\chi_C$. Using Fubini's theorem
along with the Cauchy-Bunyakovsky-Schwarz inequality, we get
\begin{multline*} 
\lambda(C_{1,2})=
\displaystyle\int_{X^{r+1}} 
\chi_C(x_1,x_3,\ldots,x_{r+1})\\
\mbox{}\hspace{25mm}
\times\chi_C(x_2,x_3,\ldots,x_{r+1}) 
\dd x_1\cdots\dd x_{r+1} \\ =
\int_{X^{r-1}}\ofbroken{
\int_{X}
\chi_C(x_1,x_3,\ldots,x_{r+1}) 
\dd x_1
\hspace{23mm}\mbox{}\\
\mbox{}\hspace{15mm}\times
\int_{X}
\chi_C(x_2,x_3,\ldots,x_{r+1})
\dd x_2
}
\dd x_3\cdots\dd x_{r+1} \\ =
\int_{X^{r-1}}\of{
\int_{X}
\chi_C(x,x_3,\ldots,x_{r+1})
\dd x
}^2
\dd x_3\cdots\dd x_{r+1}
 \\ \ge
\of{
\int_{X^{r-1}}
\int_{X}
\chi_C(x,x_3,\ldots,x_{r+1})
\dd x
\dd x_3\cdots\dd x_{r+1}
}^2 \\ =
\of{\int_{X^r}  \chi_C(x,x_3,\ldots,x_{r+1})  \dd x\dd x_3\cdots\dd x_{r+1}}^2
= \lambda(C)^2.
\end{multline*}

Each of the $r$ values $\lambda(C_e)$ for $e\in E(T)$ is estimated similarly.
It follows that
$$
r\lambda(C)^2\le(r+1)\lambda(C)-1.
$$
Rewriting this as
\begin{equation}
  \label{eq:later}
(1-\lambda(C))(r\lambda(C)-1)\ge0,  
\end{equation}
we conclude that $\lambda(C)\ge1/r$.
\end{proof}

We conclude this section with a discussion of a consequence of Theorem \ref{thm:1/r}.
We call $X\subseteq[n]^{r+1}$ a \emph{1-packing}
if no two sequences in $X$ have a common subsequence of length $r$ (or, equivalently,
if the minimum Levenshtein distance between two elements of $X$ is larger than 2).
Denote the maximum size $|X|$ of a 1-packing $X\subseteq [n]^{r+1}$ by $P(n,r+1,r)$.
The packing and covering numbers are related by the inequality
\begin{equation}\label{eq:PS}
P(n,r+1,r)\le S(n,r+1,r); 
\end{equation}
 indeed, every element of a covering $1$-insertion code $C\subseteq [n]^r$ can cover at most one element of a maximum packing $X\subseteq [n]^{r+1}$.
It is known \cite[Cor.~5.1]{Levenshtein92} that $P(n,r+1,r)/n^r\sim1/(r+1)$ if $n/r\to\infty$. 
Taking this result into account, our Theorem \ref{thm:1/r} separates the 
two values in \refeq{PS} by showing an additive gap at least $1/(r(r+1))$
between their density versions in the setting when $r$ is fixed and $n$ grows.

\section{Tur\'an systems}\label{s:Turan}

If $X\subseteq Y$ are any sets, then one can say that $Y$ covers $X$, but by a kind of duality we will also say
that \emph{$X$ covers $Y$}. Let $1<r<k<n$. A family $C$ of $r$-element subsets of $[n]$ is
called a \emph{Tur\'an $(n,k,r)$-system} if every $k$-element subset of $[n]$ is
covered by at least one member of $C$. The minimum possible cardinality of $C$
is denoted by $T(n,k,r)$. A well-known argument \cite{Keevash11,Sidorenko95} shows that $(n-r)\,T(n,k,r)\ge n\,T(n-1,k,r)$,
which implies that the densities $T(n,k,r)/{n\choose r}$ form a non-decreasing sequence
for each $k$ and $r$. The limit is called the \emph{Tur\'an density} and denoted by $t(k,r)$. For surveys including Tur\'an systems, see~\cite{Keevash11,Ruszinko07,Sidorenko95}.

We connect Tur\'an systems and covering insertion codes by showing that
the former concept can in limit be seen as a symmetric version of the latter concept.

Call a set $C\subseteq X^r$ \emph{symmetric} if $C$ is closed with respect to all
permutations of the $r$ coordinates. Let us start with a simple observation.

\begin{lemma}\label{lem:f-sym}
If $C\subseteq X^r$ is symmetric, then $f^{-r}(C)$ is also symmetric for any function $f\function YX$.  
\end{lemma}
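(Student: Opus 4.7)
The plan is to observe that the map $f^r$ intertwines the coordinate-permutation actions on $Y^r$ and $X^r$, and then pull the symmetry back along $f^r$. Concretely, for any permutation $\pi$ of $\{1,\ldots,r\}$, let $\sigma_\pi$ denote the action on $r$-tuples given by $\sigma_\pi(z_1,\ldots,z_r)=(z_{\pi(1)},\ldots,z_{\pi(r)})$. The key (tautological) identity is
$$
f^r\bigl(\sigma_\pi(y_1,\ldots,y_r)\bigr)=\bigl(f(y_{\pi(1)}),\ldots,f(y_{\pi(r)})\bigr)=\sigma_\pi\bigl(f^r(y_1,\ldots,y_r)\bigr),
$$
so $f^r\circ\sigma_\pi=\sigma_\pi\circ f^r$.

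Given this, I would take an arbitrary $(y_1,\ldots,y_r)\in f^{-r}(C)$ and an arbitrary permutation $\pi$, and argue as follows: by definition of the preimage, $f^r(y_1,\ldots,y_r)\in C$. Symmetry of $C$ then gives $\sigma_\pi(f^r(y_1,\ldots,y_r))\in C$. Applying the identity above rewrites this as $f^r(\sigma_\pi(y_1,\ldots,y_r))\in C$, which is exactly the statement that $\sigma_\pi(y_1,\ldots,y_r)\in f^{-r}(C)$. Since $\pi$ was arbitrary, $f^{-r}(C)$ is symmetric.

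I do not foresee any real obstacle here; the statement is essentially an instance of the general principle that preimages under equivariant maps preserve invariance under a group action. The only thing to be careful about is notation, namely recording cleanly that permuting coordinates and applying $f$ coordinate-wise commute, which is immediate from the componentwise definition of $f^r$.
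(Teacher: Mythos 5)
Your proof is correct. The paper gives no proof of this lemma, treating it as a ``simple observation,'' and the equivariance argument you give (that $f^r$ commutes with coordinate permutations, so preimages of symmetric sets are symmetric) is exactly the standard justification one would supply.
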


The Tur\'an density $t(k,r)$ can be characterized in terms of an analytic
object similarly to Theorem \ref{thm:s-limit}. 
Specifically, we define $s^\star([0,1],k,r)$ similarly to \refeq{s}
with the additional condition that the infimum is taken over symmetric codes.

\begin{theorem}\label{thm:t-limit} For all positive integers $k>r$, we have
$t(k,r)=s^\star([0,1],k,r)$.
\end{theorem}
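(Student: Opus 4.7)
The plan is to mirror the two-step proof of Theorem \ref{thm:s-limit}, establishing the inequalities $s^\star([0,1],k,r)\le t(k,r)$ and $t(k,r)\le s^\star([0,1],k,r)$ separately. The modifications are dictated by the fact that on the upper-bound side the code must remain invariant under coordinate permutations, while on the lower-bound side one must produce an unordered family of $r$-subsets rather than ordered $r$-tuples. I may assume $r\ge 2$ throughout, as the case $r=1$ is degenerate and easily handled by hand.

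For the upper bound, I would take an optimal Tur\'an $(n,k,r)$-system $\mathcal{T}\subseteq\binom{[n]}{r}$ of size $T(n,k,r)$ and define
$$
  C=\bigl\{(x_1,\dots,x_r)\in[n]^r : \{x_1,\dots,x_r\}\in\mathcal{T}\ \text{or some two coordinates coincide}\bigr\}.
$$
Then $C$ is symmetric by construction, and every $(y_1,\dots,y_k)\in[n]^k$ is covered: if the entries are pairwise distinct, the Tur\'an property applied to the $k$-subset $\{y_1,\dots,y_k\}$ provides an $r$-subsequence landing in~$\mathcal{T}$; otherwise any $r$-subsequence containing two positions of a coincidence lies in the ``repeats'' part of~$C$. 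A routine count gives $|C|/n^r=r!\,T(n,k,r)/n^r+O(1/n)\to t(k,r)$. Pulling $C$ back via $f_n\function{[0,1]}{[n]}$ as in the proof of Theorem~\ref{thm:s-limit} and applying Lemma~\ref{lem:f-sym} to preserve symmetry yields symmetric measurable covering codes in $[0,1]^r$ of density tending to $t(k,r)$.

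For the lower bound, I would take a symmetric measurable covering code $C\subseteq[0,1]^r$ with $\lambda(C)$ arbitrarily close to $s^\star([0,1],k,r)$, sample $y_0,\dots,y_{n-1}$ independently and uniformly from $[0,1]$, and define the random family
$$
  \mathcal{T}_n(y)=\bigl\{\{i_0,\dots,i_{r-1}\}\subseteq[n] : (y_{i_0},\dots,y_{i_{r-1}})\in C\bigr\},
$$
which is well-defined because the symmetry of $C$ makes membership independent of the ordering chosen on the $r$-subset. Almost surely the $y_i$'s are pairwise distinct, and in that event the random-sampling argument of Theorem~\ref{thm:s-limit}, applied to an arbitrary $k$-subset $K\subseteq[n]$ and the tuple $(y_j)_{j\in K}$, shows that some $r$-subset of $K$ lies in $\mathcal{T}_n(y)$, so $\mathcal{T}_n(y)$ is a.s.\ a Tur\'an $(n,k,r)$-system. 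Linearity gives $\mean|\mathcal{T}_n(y)|=\binom{n}{r}\lambda(C)$, so some realisation satisfies both the Tur\'an property and $T(n,k,r)\le|\mathcal{T}_n(y)|\le\binom{n}{r}\lambda(C)$; letting $n\to\infty$ yields $t(k,r)\le\lambda(C)$. The only subtle point---and the main obstacle---is the mismatch between codes (ordered tuples over $[n]$, possibly with repeated entries) and Tur\'an systems (unordered $r$-subsets of $[n]$); repeats contribute an $O(1/n)$ fraction of $n^r$ and are absorbed either by adjoining them to $C$ in the first direction, or by the almost-sure distinctness of the random sample~$y$ in the second.
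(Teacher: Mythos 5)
Your proposal is correct and matches the paper's own proof essentially step for step: the upper-bound direction symmetrizes an optimal Tur\'an system (all $r!$ orderings plus tuples with a repeated coordinate) and pulls back along $f_n$ using Lemmas~\ref{lem:f} and~\ref{lem:f-sym}; the lower-bound direction samples $y\in[0,1]^n$ uniformly, builds the induced family of $r$-subsets (well-defined by symmetry of $C$), checks it is almost surely Tur\'an, and uses linearity of expectation to find a realization of density at most $\lambda(C)$. The only cosmetic difference is that the paper phrases the ``repeats'' part as $([n]^r)^\ddagger$ and handles the a.s.\ distinctness by explicitly deleting a null set, but the underlying argument is identical.
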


\begin{proof}
  We first prove the inequality $s^\star([0,1],k,r)\le t(k,r)$.
  For a set $A\subseteq[n]^r$, let $A^\dagger$ denote the set of all sequences in $A$ with pairwise
  distinct elements and $A^\ddagger=A\setminus A^\dagger$ be the remaining part of~$A$.
  
Let $T_n$ be an optimal Tur\'an $(n,k,r)$-system, that is, $|T_n|=T(n,k,r)$.
Convert $T_n$ into a covering $(k-r)$-insertion code $C_n\subseteq[n]^r$ as follows.
For each $r$-element set in $T_n$, place all its $r!$ orderings in $C_n$,
thereby covering all sequences in $([n]^k)^\dagger$.
In order to cover the remaining sequences, we just add $([n]^r)^\ddagger$ to $C_n$.
Note that $C_n$ is symmetric and that
\begin{multline}
|C_n|\le |T_n|\,r!+|([n]^r)^\ddagger|=|T_n|\,r!+\\\of{n^r-n(n-1)\cdots(n-r+1)}.\label{eq:Cn}
\end{multline}
Consequently,
$$
\frac{|C_n|}{n^r}\le\frac{|T_n|}{{n\choose r}}+o(1)
$$ 
where the little-o term approaches 0 as $n$ increases.
Consider $D_n=f_n^{-r}(C_n)$ for the function $f_n\function{[0,1]}{[n]}$ defined
in the proof of Theorem \ref{thm:s-limit}. Note that $f_n$ has preimages of measure $1/n$ each.
By Lemma \ref{lem:f}, $D_n\subseteq[0,1]^r$ is a covering
$(k-r)$-insertion code over $[0,1]$. Since $C_n$ is symmetric,
$D_n$ is also symmetric by Lemma \ref{lem:f-sym}.
It follows that
\begin{multline*}
s^\star([0,1],k,r)\le\lambda(D_n)=\frac{|C_n|}{n^r}\\
\le\frac{|T(n,k,r)|}{{n\choose r}}+o(1)
\le t(k,r)+o(1),
\end{multline*}
yielding the required inequality.

We now prove that, conversely, $t(k,r)\le s^\star([0,1],k,r)$.
 Let $C\subseteq [0,1]^r$ be an arbitrary measurable symmetric covering $(k-r)$-insertion code over $[0,1]$. Given an integer $n\ge k$, consider a sequence $y=(y_0,\dots,y_{n-1})$ in $[0,1]^n$ with pairwise different elements.
 Define a family $G=G(y)$ of $r$-element subsets of $[n]$ by putting $\{i_1,\dots,i_r\}\subseteq [n]$ in $G$ if and only if $(y_{i_1},\dots,y_{i_r})\in C$. The last condition does not depend on the order of indices by the symmetry of~$C$. 

Let us show that $G$ is a Tur\'an $(n,k,r)$-system. Take any $k$-element set $K\subseteq [n]$. Since the subsequence $(y_i)_{i\in K}$ of $y$ is covered by some sequence in $C$, there is an
$r$-element set $\{i_1,\dots,i_r\}\subseteq K$ such that $(y_{i_1},\dots,y_{i_r})\in C$. By definition, this means that $\{i_1,\dots,i_r\}\in G$. Since $K$ was an arbitrary $k$-element subset of $[n]$, $G$ is indeed a Tur\'an $(n,k,r)$-system.

Now, take a uniformly random $y=(y_0,\dots,y_{n-1})$ in $[0,1]^n$; equivalently, we take independent uniform $y_0,\dots,y_{n-1}\in [0,1]$. With probability 1, all $y_i$ are different. To compute the expected number of $r$-element sets belonging to $G$, we sum the probability that $R\in G$ over all $R=\{i_1,\dots,i_r\}\subseteq [n]$. By the uniformity of $(y_0,\dots,y_{n-1})\in [0,1]^n$, we have that $(y_{i_1},\dots,y_{i_r})$ is a uniform element of $[0,1]^r$. Thus, the probability that $(y_{i_1},\dots,y_{i_r})\in C$ (which is exactly the  probability that $R\in G$) is equal to the measure $\lambda(C)$ of $C$. We conclude that $\mean |G|={n\choose r}\lambda(C)$.

Of course, if we remove from $[0,1]^n$ the null-set $D$ of points $y$ where some two coordinates $y_i$'s coincide, then the expectation does not change.
Take $(y_0,\dots,y_{n-1})\in [0,1]^n\setminus D$ such that $|G|$ is at most its expected value ${n\choose r}\lambda(C)$. Then the density of $G$ is most $\lambda(C)$. Since $n$ and $C$ were arbitrary, with $\lambda(C)$ arbitrarily close to $s^\star([0,1],k,r)$, the required inequality follows.
\end{proof}

One can show that the appropriately defined parameter $s^\star(X,k,r)$ (resp.\ $s(X,k,r)$) is the same for all atomless probability spaces $X$, since each such space admits, for every $n$, a measurable partition into parts of measure $1/n$ each.  

For fixed $k$ and $r$, one can alternatively define the function $s^\star(k,r)$ using the \emph{$r$-hypergraphon} limit object introduced by Elek and Szegedy~\cite{ElekSzegedy12}. While the advantage of this approach is that the infimum in the definition would be in fact the minimum (that is, would be attained) potentially allowing for further methods like variational calculus, the limit object is rather complicated and requires a lot of technical preliminaries. So we stay with our simple setting of measurable subsets of $[0,1]^r$.

We state an immediate consequence of~\eqref{eq:Cn} (which also follows from Theorems \ref{thm:s-limit} and~\ref{thm:t-limit}).

\begin{corollary}\label{cor:s-t}
  $s(k,r)\le t(k,r)$.
\end{corollary}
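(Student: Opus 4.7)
The plan is to exploit the fact that a symmetric covering code is, in particular, a covering code, so restricting the infimum in the definition of $s([0,1],k,r)$ to symmetric sets can only increase its value. Formally, every measurable symmetric covering $(k-r)$-insertion code $C\subseteq[0,1]^r$ is a valid candidate in the infimum \refeq{s} defining $s([0,1],k,r)$, hence
$$
s([0,1],k,r)\le s^\star([0,1],k,r).
$$
Combining this with Theorem \ref{thm:s-limit}, which gives $s(k,r)=s([0,1],k,r)$, and Theorem \ref{thm:t-limit}, which gives $t(k,r)=s^\star([0,1],k,r)$, yields $s(k,r)\le t(k,r)$ immediately.

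Alternatively, one can argue directly from inequality \refeq{Cn} without passing through the analytic framework. Let $T_n$ be an optimal Tur\'an $(n,k,r)$-system and let $C_n\subseteq[n]^r$ be the symmetric covering $(k-r)$-insertion code built from $T_n$ as in the proof of Theorem \ref{thm:t-limit}. Since $C_n$ covers $[n]^k$, we have $S(n,k,r)\le|C_n|$. Dividing \refeq{Cn} by $n^r$ gives
$$
\frac{S(n,k,r)}{n^r}\le \frac{|T_n|\,r!}{n^r}+\frac{n^r-n(n-1)\cdots(n-r+1)}{n^r}=\frac{T(n,k,r)}{\binom{n}{r}}+o(1),
$$
where the error term vanishes as $n\to\infty$. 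Passing to the limit and applying Lemma \ref{lem:s} on the left-hand side, together with the fact that $T(n,k,r)/\binom{n}{r}\to t(k,r)$, produces the desired inequality $s(k,r)\le t(k,r)$.

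There is no real obstacle here: both routes are short, and the content of the corollary is essentially that the symmetry restriction only makes the optimization harder. Of the two approaches, the analytic one is a one-line observation once Theorems \ref{thm:s-limit} and \ref{thm:t-limit} are in hand, so that is the version I would record.
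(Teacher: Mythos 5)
Your proposal is correct and matches the paper exactly: the paper notes that Corollary \ref{cor:s-t} ``is an immediate consequence of~\eqref{eq:Cn} (which also follows from Theorems \ref{thm:s-limit} and~\ref{thm:t-limit}),'' and you spell out both of these routes in detail. Nothing is missing.
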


Theorem \ref{thm:1/r}, therefore, implies that
$$
t(r+1,r)\ge s(r+1,r)\ge1/r.
$$
This lower bound $t(r+1,r)\ge1/r$ was shown  independently by de Caen~\cite{Decaen83ac}, Sidorenko~\cite{Sidorenko82}, and Tazawa and Shirakura~\cite{TazawaShirakura83}
and generalized by de Caen~\cite{Decaen83}.
Thus, Theorem \ref{thm:1/r} is an extension of this classical result to
the realm of covering insertion codes.

On the other hand, no analogue of the upper bound $s(r+1,r)=O(1/r)$ (see \refeq{Lev-Lenz})
was known for $t(r+1,r)$. Quite the contrary, de Caen~\cite{Decaen94} conjectured that
$r\cdot t(r+1,r)\to\infty$ as $r$ grows. Inspired by the relationship between Tur\'an systems
and covering insertion codes, which we pinpoint here, Pikhurko \cite{Pikhurko25}
disproved this conjecture by showing that $t(r+1,r)\le6.239/(r+1)$ for all $r$
and $t(r+1,r)\le4.911/(r+1)$ for all sufficiently large $r$. By Corollary \ref{cor:s-t},
the same upper bounds apply to~$s(r+1,r)$. Alternatively, Corollary~\ref{cor:OP} below follows from the recurrence in~\cite{LenzRSY21}, via the same analysis as that in the proof of \cite[Lemma~2.3]{Pikhurko25}.

\begin{corollary}\label{cor:OP}
  \hfill

  \begin{enumerate}[\bf 1.]
  \item
    $s(r+1,r)\le6.239/(r+1)$ for all $r$.
  \item
    $s(r+1,r)\le4.911/(r+1)$ for all sufficiently large~$r$.
  \end{enumerate}
\end{corollary}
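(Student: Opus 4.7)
The plan is to derive both parts directly from Corollary \ref{cor:s-t}, since that inequality reduces upper bounds on $s(r+1,r)$ to upper bounds on the Tur\'an density $t(r+1,r)$. By Corollary \ref{cor:s-t}, for every $r$ we have
\[
s(r+1,r)\le t(r+1,r).
\]
Hence it suffices to quote the two bounds on $t(r+1,r)$ established by Pikhurko \cite{Pikhurko25}, namely $t(r+1,r)\le 6.239/(r+1)$ for all $r$ (giving part 1) and $t(r+1,r)\le 4.911/(r+1)$ for all sufficiently large $r$ (giving part 2). The first step therefore is simply to invoke Corollary \ref{cor:s-t}; the second step is to cite the corresponding bound from \cite{Pikhurko25} and chain the two inequalities.

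For a self-contained argument that does not go through the Tur\'an density, I would instead follow the alternative route indicated in the discussion after Theorem \ref{thm:t-limit}: rework the construction of Lenz et al.~\cite{LenzRSY21} in the measurable setting of Section \ref{s:analytic}. Concretely, I would take the recursive construction of a covering single-insertion code from \cite{LenzRSY21} and re-analyse its density by replacing the crude bound $\mu_I\le 7$ appearing in \cite[Lemma 8]{LenzRSY21} with the sharper estimate $\mu_I\le 4.911$ obtainable (for large enough block-length $q$) by the same optimisation argument used in \cite[Lemma 2.3]{Pikhurko25}. Substituting this improved constant into the recurrence then propagates through the induction and yields the claimed asymptotic upper bound $4.911/(r+1)$; a slightly coarser choice of parameters, valid for every $r$, yields the $6.239/(r+1)$ bound.

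The main obstacle in the second route is bookkeeping: reproducing the Lenz–Rashtchian–Siegel–Yaakobi recursion and verifying that the Pikhurko-style optimisation indeed replaces $7$ by $4.911$ (respectively by $6.239$) at the right step requires a careful audit of \cite[Lemma 8]{LenzRSY21}, analogous to the audit carried out in \cite[Lemma 2.3]{Pikhurko25}. Since the authors explicitly offer both derivations and the first one is a one-line combination of Corollary \ref{cor:s-t} with the cited Tur\'an-density bounds, I would present the Tur\'an-density route as the proof and mention the alternative only as a remark, thereby avoiding the technicalities altogether.
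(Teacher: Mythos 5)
Your proposal is correct and follows exactly the route the paper takes: it applies Corollary \ref{cor:s-t} to transfer Pikhurko's bounds $t(r+1,r)\le 6.239/(r+1)$ and $t(r+1,r)\le 4.911/(r+1)$ (for large $r$) from the Tur\'an density to $s(r+1,r)$, and mentions the alternative derivation via the recurrence of Lenz et al.\ as a remark, which is also precisely what the paper does.
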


In the particular case of $r=2$, we have
$$
s([0,1],3,2)=s^\star([0,1],3,2)=1/2.
$$
Indeed, $s([0,1],3,2)\ge1/2$ by Theorem \ref{thm:1/r}.
The upper bound $s^\star([0,1],3,2)\le1/2$ is provided
by the symmetric single-insertion code $[0,\frac12]^2\cup(\frac12,1]^2$ covering
the cube $[0,1]^3$,
which is an analog of the single-insertion code $\Set{(0,0),(1,1)}$
covering the Boolean cube $\{0,1\}^3$.

Along with Theorem \ref{thm:t-limit}, this implies that
$
t(3,2)=\frac12,
$
which is a well-known fact belonging to the basics of graph theory.
The lower bound $t(3,2)\ge\frac12$ is known as Mantel's theorem.
The upper bound $t(3,2)\le\frac12$ follows
by considering the disjoint union of complete graphs $K_{\lfloor n/2\rfloor}$ and~$K_{\lceil n/2\rceil}$.

We conclude this section with an overview of the known bounds on $t(r+1,r)$ for $r\ge3$.

\paragraph{Bounds for small $r$}

For $r=3$ it is known that
\begin{equation}
  \label{eq:r=3}
0.438334\le t(4,3)\le\frac49=0.444\ldots.  
\end{equation}
The lower bound is due to Razborov \cite{Razborov10}.
The upper bound, conjectured to be optimal, is given by many different constructions, one of which is the following.
Split $[n]$ into three parts $V_0,V_1,V_2$ as evenly as possible and put a 3-element set in $C$
if it either lies entirely inside some $V_i$ or, for some residues $i$ modulo $3$, has two elements in $V_i$ and one element in~$V_{i+1}$.

An account of the known bounds on $t(r+1,r)$ for other small values of $r$ can
be found in the survey~\cite{Sidorenko95}.

\paragraph{General bounds}

The bound $t(r+1,r)\ge1/r$ is improved in \cite{ChungL99}
for odd $r$ and in \cite{LuZ09} for even $r$.
For all odd $r\ge3$, it is shown in \cite{ChungL99} that
\begin{equation}
  \label{eq:r-odd}
 t(r+1,r)\ge\frac{5r-\sqrt{9r^2+24r}+12}{2r(r+3)}=\frac1r+\frac1{r^2}+O(r^{-3}).
\end{equation}
For all even $r\ge4$, it is shown in \cite{LuZ09} that
\begin{equation}
  \label{eq:r-even}
 t(r+1,r)\ge\frac1r+\frac{(1-1/r^{p-1})(r-1)^2}{2r^p\of{{r+p\choose p-1}+{r+1\choose2}}},
\end{equation}
where $p$ is the least prime factor of $r-1$. This bound is the strongest if $p=3$, that is,
$r=4\pmod6$. In this case, it reads
$$
 t(r+1,r)\ge\frac1r+\frac1{2r^3}+O(r^{-4}).
$$
In the worst case, which happens when $p=r-1$, Bound \refeq{r-even} yields
$$
 t(r+1,r)\ge\frac1r+\frac{1-o(1)}{4r^{r-3}{2r\choose r}}.
$$

\section{A further improvement of the lower bound for $s(r+1,r)$}\label{s:further}

We now improve Theorem \ref{thm:1/r} by showing that any lower bound for $t(r+1,r)$
better than $1/r$ implies a lower bound for $s(r+1,r)$ better than~$1/r$.

We have $s(r+1,r)\le t(r+1,r)$ by Corollary \ref{cor:s-t}.
Let us prove a relation in the opposite direction.

\begin{theorem}\label{thm:tr-sr} For every $r\ge 3$, it holds that
  \begin{multline*}
  t(r+1,r)\le s(r+1,r)\\
  +2r!\sqrt{r(r+1)(1-s(r+1,r))\of{s(r+1,r)-\frac1r}}.
  \end{multline*}
\end{theorem}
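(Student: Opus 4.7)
The plan is to pass from a (possibly asymmetric) covering single-insertion code to a symmetric one with a controlled increase in measure, then invoke Theorem \ref{thm:t-limit}. Take a measurable covering single-insertion code $C \subseteq [0,1]^r$ with $\lambda(C) = s$ arbitrarily close to $s(r+1,r)$, and form its symmetrization $C^{\mathrm{sym}} := \bigcup_{\sigma \in S_r}\sigma(C)$, where $\sigma(C) := \{(x_{\sigma(1)},\ldots,x_{\sigma(r)}) : (x_1,\ldots,x_r)\in C\}$. Since $C \subseteq C^{\mathrm{sym}}$ and $C$ covers $[0,1]^{r+1}$, so does $C^{\mathrm{sym}}$, which is symmetric by construction. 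Theorem \ref{thm:t-limit} then gives $t(r+1,r) \le \lambda(C^{\mathrm{sym}})$, reducing the task to showing
\[
\lambda(C^{\mathrm{sym}}) - s \le 2r!\sqrt{r(r+1)(1-s)(s-1/r)},
\]
after which letting $s \to s(r+1,r)$ yields the theorem.

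A union bound gives $\lambda(C^{\mathrm{sym}}) - s \le \sum_{\sigma \ne \mathrm{id}}\lambda(\sigma(C) \setminus C)$. Writing each $\sigma$ as a product of adjacent transpositions $\tau_j = (j,j+1)$ and applying the triangle inequality for symmetric differences (together with invariance of $\lambda$ under coordinate permutations) reduces the work to controlling $\lambda(\tau_j(C)\triangle C)$ for each adjacent transposition, since the total number of adjacent-transposition occurrences across all minimum-length decompositions in $S_r$ is controlled by $r!\binom{r}{2}$. For fixed $\vec{y}\in[0,1]^{r-2}$, let $A_{\vec{y}}:=\{(u,v) : (y_1,\ldots,y_{j-1},u,v,y_{j+2},\ldots,y_r)\in C\}$ be the 2-dimensional slice of $C$ in coordinates $j, j+1$. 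Then
\[
\lambda(\tau_j(C)\triangle C) = 2\int\bigl(\lambda(A_{\vec{y}}) - \lambda(A_{\vec{y}}\cap A_{\vec{y}}^T)\bigr)\,d\vec{y}.
\]

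To estimate this, I would invoke the stability of the argument in the proof of Lemma \ref{lem:1/r}. That argument yields $\sum_{j=1}^{r} \mathrm{Var}(\phi_j) \le r(1-s)(s-1/r)$, where $\phi_j(z_1,\ldots,\widehat{z_j},\ldots,z_r) := \int_0^1 \chi_C(z_1,\ldots,u,\ldots,z_r)\,du$ is the marginal density of $\chi_C$ obtained by integrating out the $j$-th coordinate. A Cauchy--Schwarz estimate relating the 2-dimensional asymmetry $\int\lambda(A_{\vec{y}}\setminus A_{\vec{y}}^T)\,d\vec{y}$ to the 1-dimensional marginal variances of $\phi_j$ and $\phi_{j+1}$ should produce the required bound on $\lambda(\tau_j(C)\triangle C)$, and summing over $\sigma$ with the right combinatorial factor gives the displayed inequality.

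The main obstacle is this last link. Small variance of $\phi_j$ means the density of $C$ is close to uniform in the $j$-th coordinate direction, but this does not immediately translate into $(j,j+1)$-symmetry in the 2-dimensional slice. Closing the gap likely requires either a secondary application of the anti-Bonferroni argument of Lemma \ref{lem:anti-bonf} directly in the 2-dimensional slice (with a suitable auxiliary tree), or a refined Cauchy--Schwarz comparison that combines both adjacent marginals $\phi_j$ and $\phi_{j+1}$ to bound the 2-dimensional asymmetry $\lambda(A_{\vec{y}} \setminus A_{\vec{y}}^T)$ against the 1-dimensional variance slack from the proof of Lemma \ref{lem:1/r}.
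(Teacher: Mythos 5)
Your reduction to the single quantity $\lambda(C^{\mathrm{sym}})$ and the invocation of Theorem~\ref{thm:t-limit} match the paper's high-level plan, and your bound on the Bonferroni slack also matches (the paper gets $\lambda(R)\le r(r+1)(1-\lambda(C))(\lambda(C)-1/r)$ via an improved version of Lemma~\ref{lem:anti-bonf} with star trees). But the last link you flag is not a finishing technicality: it is the whole theorem, and the route you sketch does not close it.

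The obstruction is structural. The Cauchy--Schwarz slack in Lemma~\ref{lem:1/r} controls $\mathrm{Var}(\phi_j)$, i.e.\ how close the one-dimensional marginal of $\chi_C$ is to a constant. Small marginal variance says nothing about symmetry of a $2$-dimensional slice: for $r=2$, any set $C\subseteq[0,1]^2$ with constant row and column marginals (a doubly stochastic pattern) has $\mathrm{Var}(\phi_1)=\mathrm{Var}(\phi_2)=0$ and yet need not be symmetric at all. So no amount of clever Cauchy--Schwarz comparison between $\phi_j,\phi_{j+1}$ and the slice asymmetry $\lambda(A_{\vec y}\setminus A_{\vec y}^T)$ can work without a further input. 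The input you are missing is exactly the covering property, which your proposal only used once (to apply Lemma~\ref{lem:1/r}) and then abandoned.

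The paper uses the covering property a second time, and in an essential way, to get symmetry. It works one level up, in $X^{r+1}$, with the kernel $K=\bigcap_i C_i$. Claim~\ref{cl:K-sym} shows that $K\setminus\sclo R$ is \emph{exactly} symmetric: the trick is that if $(x_1,\ldots,x_{r+1})\in K\subseteq C_1\cap C_2$, then both $(x_2,\ldots,x_{r+1})$ and $(x_1,x_3,\ldots,x_{r+1})$ lie in $C$, hence the transposed point $(x_2,x_1,x_3,\ldots,x_{r+1})$ is again in $C_1\cap C_2$, and if it avoids $\sclo R$ it must land in $K$. This is a combinatorial propagation argument that has no analogue at the level of marginal densities. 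The paper then transfers the symmetry from $K$ down to $C$: for $(x_1,\ldots,x_r)\in C$ it tries to find an extension $x_{r+1}$ with $(x_1,\ldots,x_r,x_{r+1})\in K\setminus\sclo R$, and shows the set of $(x_1,\ldots,x_r)$ for which this fails has measure at most $r!\,\delta+r!\,\lambda(R)/\delta$ via the splinter/extension-deletion sets $C'$ and $W$. Optimizing $\delta=\sqrt{\lambda(R)}$ gives the stated bound. Your adjacent-transposition decomposition is a plausible-looking scaffold, but without an analogue of the kernel argument there is no mechanism to produce symmetry from the covering hypothesis, and the proof cannot be completed along the lines you propose.
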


\begin{proof}
  Let $X=[0,1]$ and let $\lambda$ denote the Lebesgue measure on $X$.
  Moreover, we write $\lambda$ to denote also the corresponding product measure on any $k$-dimensional cube $X^k$.
Consider a measurable set $C\subseteq X^r$. As in Section \ref{s:lower}, for each $i\le r+1$ we define $C_i\subseteq X^{r+1}$
to be the set of all sequences $x$ in $X^{r+1}$ such that the subsequence obtained
by removing the $i$-th element of $x$ belongs to $C$. Suppose that $C$ covers $X^{r+1}$, that is,
$X^{r+1}=\bigcup_{i=1}^{r+1}C_i$. We define
\begin{eqnarray*}
K&=&\bigcap_{i=1}^{r+1}C_i,\\
P_i&=&C_i\setminus\bigcup_{j\ne i}C_j,\\
R&=&X^{r+1}\setminus\of{K\cup\bigcup_{i=1}^{r+1}P_i},\\
R_i&=&R\setminus C_i.  
\end{eqnarray*}
In other words, $K$ is
the atomic set of the Boolean algebra generated by $C_1,\dots,C_{r+1}$ occurring in these
sets with the maximum possible multiplicity $t(K)=r+1$. For each $i\le r+1$, $P_i$ is
an atomic set of minimum possible multiplicity $t(P_i)=1$ (as 0 is impossible by the covering property). The remaining part $R$
is the union of all atomic sets $A$ of intermediate multiplicity $1<t(A)\le r$.
Finally, $R_i$ is the part of $R$ formed by the atomic sets outside~$C_i$.

Let $k=r+1$.
The inverse Bonferroni's inequality given by Lemma \ref{lem:anti-bonf}
can be somewhat improved. While $T$ was in this lemma an arbitrary tree
on vertices $1,\dots,k$, let $T_j$ be now the star with centre at $j$, that is,
$E(T_j)=\{\{j,i\}: 1\le i\le r+1,\ i\not=j\}$. In the case that $T=T_j$,
Inequality \refeq{anti-bonf} can be improved to
$$
\lambda\of{\bigcup_{i=1}^kC_i}\le\sum_{i=1}^k\lambda(C_i)-\sum_{e\in E(T_j)}\lambda(C_e)-\lambda(R_j),
$$
 which can be routinely verified by looking at the contribution of each atomic set.
Since $C$ covers $X^{r+1}$, the left hand side is equal to 1.
Arguing as in the proof of Lemma \ref{lem:1/r}, in place of Inequality \refeq{later} we obtain
$$
\lambda(R_j)\le(1-\lambda(C))(r\lambda(C)-1)
$$
for each $j\le r+1$. It follows that
\begin{multline}
\lambda(R)\le\sum_{j=1}^{r+1}\lambda(R_j)\le(r+1)(1-\lambda(C))(r\lambda(C)-1)\\
  = r(r+1)(1-\lambda(C))\of{\lambda(C)-\frac1r}.\label{eq:lambdaR}
\end{multline}
This shows that if the covering code $C$ has density sufficiently close to $1/r$, then up to
a small set $R$, the Boolean algebra generated by $C_1,\dots,C_{r+1}$
is the sunflower with kernel $K$ and petals $P_1,\dots,P_{r+1}$.

Given a set $M\subseteq X^k$, we define its \emph{symmetric closure} $\sclo M$
to be the inclusion-minimal symmetric superset of~$M$:
\begin{multline*}
 \sclo M=\{(x_1,\dots,x_k)\in X^k: \exists \mbox{ permutation $\sigma$ of $\{1,\dots, k\}$}\\
 \mbox{with $(x_{\sigma(1)},\dots,x_{\sigma(k)})\in M$}\}.
\end{multline*}

\begin{claim}\label{cl:K-sym}
  $K\setminus\sclo R$ is symmetric.
\end{claim}

\begin{subproof}
  Let $(x_1,x_2,x_3,\dots,x_{r+1})\in K\setminus\sclo R$.
  Since $(x_1,x_2,x_3,\dots,x_{r+1})\in K\subseteq C_1\cap C_2$, we have
  $(x_2,x_3,\dots,x_{r+1})\in C$ and $(x_1,x_3,\dots,x_{r+1})\in C$.
  By the definition of $C_i$, this implies that
  $(x_2,x_1,x_3,\dots,x_{r+1})\in C_1\cap C_2$. Since
  $(x_1,x_2,x_3,\dots,\allowbreak x_{r+1})\notin\sclo R$, the vector $(x_2,x_1,x_3,\dots,x_{r+1})$
  does not belong to $R$ and, therefore, belongs to $K$. This argument actually
  shows that $(x_1,x_2,x_3,\dots,x_{r+1})$ still belongs to $K\setminus\sclo R$
  after transposing any two coordinates. It follows that every permutation of
  $(x_1,x_2,x_3,\dots,x_{r+1})$ stays in~$K\setminus\sclo R$.
\end{subproof}

Claim \ref{cl:K-sym} shows that if $C$ is a covering code of density $\lambda(C)\approx1/r$
and, therefore, $\lambda(R)$ is small, then the kernel $K$ is almost symmetric.

Given $(x_1,\dots,x_{r-1},x_r)\in C$, define the \emph{splinter} of $C$ at $(x_1,\dots,x_{r-1},x_r)$
with respect to the last coordinate as the set
$$
S(x_1,\dots,x_{r-1},x_r)=\Set{x\in X}{(x_1,\dots,x_{r-1},x)\in C}.
$$
Let $\delta\in(0,1)$ be the parameter whose value will be chosen later.
Consider the part $C'$ of $C$ consisting of the vectors with small splinters. Specifically,
$$
C'=\Set{(x_1,\dots,x_r)\in C}{\lambda\of{S(x_1,\dots,x_r)}\le\delta}.
$$
Note that
\begin{equation}
  \label{eq:Cde}
\lambda(C')\le\delta.  
\end{equation}
Given $(x_1,\dots,x_{r-1},x_r)\in C$, we also define its \emph{extension-deletion set}
$E(x_1,\dots,x_{r-1},\allowbreak x_r)\subseteq X^{r+1}$ by
\begin{multline*}
E(x_1,\dots,x_{r-1},x_r)
=\bigl\{(x_1,\dots,x_{r-1},x_r,x_{r+1})\in X^{r+1}\,:\\
(x_1,\dots,x_{r-1},x_{r+1})\in C\bigr\}.
\end{multline*}
Finally, let
$$
W=\Set{(x_1,\dots,x_r)\in C\setminus C'}{E(x_1,\dots,x_r)\subseteq\sclo R}.
$$

\begin{claim}\label{cl:W}
  $\lambda(\sclo W)\le \lambda(\sclo R)/\delta$.
\end{claim}

\begin{subproof}
  Consider the set 
  $$W^+=\bigcup_{(x_1,\dots,x_r)\in W}E(x_1,\dots,x_r).$$
  Since $W^+\subseteq\sclo R$, we have $\sclo{W^+}\subseteq\sclo R$ and,
  therefore,
  \begin{equation}
    \label{eq:W1}
\lambda(\sclo{W^+})\le\lambda(\sclo R).    
  \end{equation}
  On the other hand,
  \begin{equation}
    \label{eq:W2}
\lambda(\sclo{W^+})\ge\lambda(\sclo W)\cdot\delta.
  \end{equation}
  Indeed, for $(x_1,\dots,x_r)\in\sclo  W$ let $\sigma$ be the lexicographically smallest
  permutation of $\{1,\dots,r\}$ such that $(x_{\sigma(1)},\dots,x_{\sigma(r)})\in W$.
  For every $x\in S(x_{\sigma(1)},\dots,x_{\sigma(r)})$, we have
  $(x_{\sigma(1)},\dots,x_{\sigma(r)},x)\in E(x_{\sigma(1)},\dots,x_{\sigma(r)})\subseteq W^+$
  and, hence, $(x_1,\dots,x_r,x)\in\sclo{W^+}$. To obtain Inequality \refeq{W2}, it suffices
  to note that $\lambda(S(x_{\sigma(1)},\dots,x_{\sigma(r)}))>\delta$ because $(x_{\sigma(1)},\dots,x_{\sigma(r)})\notin C'$.

  The claim readily follows from Inequalities \refeq{W1} and~\refeq{W2}.
\end{subproof}

We now show that if $\delta$ is chosen so that $\lambda(C')$ and $\lambda(W)$ are small,
then $C$ is almost symmetric.

\begin{claim}\label{cl:C-sym}
  $\sclo{C\setminus(C'\cup W)}\subseteq C$.
\end{claim}

\begin{subproof}
  For any $(x_1,\dots,x_r)\in C$, note that its extension $(x_1,\dots,x_r,x_{r+1})$
  belongs to $E(x_1,\dots,x_r)$ if and only if it belongs to $C_r\cap C_{r+1}$.
  Suppose that $(x_1,\dots,x_r)\in C\setminus(C'\cup W)$.
  By the definition of $W$, we have $E(x_1,\dots,x_r)\not\subseteq\sclo R$.
  This means that there exists $x_{r+1}\in X$ such that $(x_1,\dots,x_r,x_{r+1})$
  belongs to $C_r\cap C_{r+1}$ but not to $\sclo R$. It follows that
  $$
  (x_1,\dots,x_r,x_{r+1})\in(C_r\cap C_{r+1})\setminus \sclo R\subseteq (C_r\cap C_{r+1})\setminus R\subseteq K.
  $$
  We conclude that $(x_1,\dots,x_r,x_{r+1})\in K\setminus\sclo R$.
  Let $\sigma$ be an arbitrary permutation of $\{1,\dots,r\}$.
  By Claim \ref{cl:K-sym}, we have $(x_{\sigma(1)},\dots,x_{\sigma(r)},x_{r+1})\in K$.
  Since $K\subseteq C_{r+1}$, this implies that $(x_{\sigma(1)},\dots,x_{\sigma(r)})\in C$.
\end{subproof}

Since $C$ covers $X^{r+1}$, its symmetrization $\sclo C$ covers $X^{r+1}$ as well and,
by Claim \ref{cl:C-sym}, we have
$$
  s^\star([0,1],r+1,r)\le\lambda(\sclo C)\le\lambda(C)+\lambda(\sclo{C'})+\lambda(\sclo{W}).
$$
Taking into account Bound \refeq{Cde} and Claim \ref{cl:W}, we obtain
\begin{multline*}
  s^\star([0,1],r+1,r)\le\lambda(C)+r!\,\lambda(C')+\lambda(\sclo R)/\delta\\
  \le\lambda(C)+r!\,\delta+r!\,\lambda(R)/\delta.
\end{multline*}
Setting $\delta=\sqrt{\lambda(R)}$, we conclude that
$$
  s^\star([0,1],r+1,r)\le\lambda(C)+2r!\sqrt{\lambda(R)}.
$$
Along with Bound \refeq{lambdaR}, this implies that
\begin{multline*}
 s^\star([0,1],r+1,r)\le\lambda(C)\\
 +2r!\sqrt{r(r+1)(1-\lambda(C))\of{\lambda(C)-\frac1r}}.
\end{multline*}
Since $\lambda(C)$ can be taken arbitrarily close to $s([0,1],r+1,r)$, the proof is completed
by applying Theorems \ref{thm:s-limit} and~\ref{thm:t-limit}.
\end{proof}

\begin{corollary}\label{cor:s-t-lower}
  $s(r+1,r)\ge\frac1r+(1-o(1))\of{\frac{t(r+1,r)-1/r}{2r\cdot r!} }^2$. 
\end{corollary}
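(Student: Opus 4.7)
The plan is to derive Corollary~\ref{cor:s-t-lower} directly from Theorem~\ref{thm:tr-sr} by viewing the latter as a quadratic inequality in an appropriate variable. Set $\alpha = t(r+1,r) - 1/r$ and $\beta = s(r+1,r) - 1/r$. Theorem~\ref{thm:1/r} gives $\beta \ge 0$, and Corollary~\ref{cor:s-t} gives $\beta \le \alpha$; if $\alpha \le 0$ the target inequality reduces to Theorem~\ref{thm:1/r}, so I would assume $\alpha > 0$. Theorem~\ref{thm:tr-sr} then rewrites as
$$\alpha - \beta \le B\sqrt{\beta}, \qquad \text{where } B := 2r!\sqrt{r(r+1)(1 - s(r+1,r))}.$$

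Next, I would substitute $x = \sqrt{\beta} \ge 0$, turning the inequality into $x^{2} + Bx - \alpha \ge 0$. Since $x \ge 0$, it must lie above the positive root of the quadratic, so
$$\sqrt{\beta} \;\ge\; \frac{-B + \sqrt{B^{2} + 4\alpha}}{2} \;=\; \frac{2\alpha}{B + \sqrt{B^{2} + 4\alpha}},$$
after rationalising. Squaring yields
$$\beta \;\ge\; \frac{4\alpha^{2}}{\left(B + \sqrt{B^{2} + 4\alpha}\right)^{2}}.$$

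The final step is an asymptotic estimate as $r \to \infty$. By Corollary~\ref{cor:OP} we have $s(r+1,r) = o(1)$, so $1 - s(r+1,r) = 1 - o(1)$ and $B^{2} = 4(r!)^{2} r(r+1)(1 - o(1))$ grows much faster than $\alpha \le 1$. Hence $4\alpha/B^{2} = o(1)$, which gives $B + \sqrt{B^{2} + 4\alpha} = 2B(1 + o(1))$ and therefore
$$\beta \;\ge\; (1 - o(1))\,\frac{\alpha^{2}}{B^{2}} \;=\; (1 - o(1))\,\frac{\alpha^{2}}{4(r!)^{2} r(r+1)(1 - s(r+1,r))}.$$
Using once more $1 - s(r+1,r) = 1 - o(1)$ together with $r(r+1) = r^{2}(1 + o(1))$, I can absorb both factors into $(1 - o(1))$ to obtain $\beta \ge (1 - o(1))\bigl(\alpha/(2r \cdot r!)\bigr)^{2}$, which is exactly the claimed inequality. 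There is no genuine obstacle here; the only care required is bookkeeping the $o(1)$ terms consistently through the successive estimates.
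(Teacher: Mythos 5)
Your proof is correct and follows essentially the same route as the paper: both treat Theorem~\ref{thm:tr-sr} as a quadratic inequality in $\sqrt{s(r+1,r)-1/r}$, solve for the positive root, and then carry out an asymptotic estimate. The only cosmetic difference is that the paper bounds the coefficient $2r!\sqrt{r(r+1)(1-s(r+1,r))}$ by $2r\cdot r!$ up front using $1-s(r+1,r)\le 1-1/r$, whereas you keep the exact coefficient and invoke Corollary~\ref{cor:OP} to dispose of the $1-s(r+1,r)$ factor at the end; both are fine.
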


\begin{proof}
  Let $s_r=s(r+1,r)-1/r$ and $t_r=t(r+1,r)-1/r$. Set $R=r\cdot r!$. 
Using the lower bound $s(r+1,r)\ge1/r$ of Theorem \ref{thm:1/r}, 
from Theorem \ref{thm:tr-sr} we derive
$$
t_r\le s_r+2r!\sqrt{r(r+1)(1-1/r)s_r}<s_r+2R\sqrt{s_r}.
$$
This readily implies
$$
\sqrt{s_r}>\sqrt{t_r+R^2}-R=\frac{t_r}{R+\sqrt{R^2+t_r}}>\frac{t_r}{R+\sqrt{R^2+1}},
$$
yielding the desired bound.
\end{proof}

Plugging in Bound \refeq{r-odd}, we obtain the following.

\begin{corollary}\label{cor:odd}
  For odd $r$,
  $$
s(r+1,r)\ge\frac1r+\frac{1-o(1)}{4r^6(r!)^2}.
  $$
\end{corollary}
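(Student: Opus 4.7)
The plan is to apply Corollary \ref{cor:s-t-lower} directly, substituting the lower bound on $t(r+1,r)$ given for odd $r$ in~\refeq{r-odd}. This is essentially a one-line derivation once both ingredients are in hand, so the ``proof'' will amount to a careful asymptotic substitution.

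First I would isolate the quantity $t(r+1,r)-1/r$ that appears in Corollary~\ref{cor:s-t-lower}. From \refeq{r-odd}, namely
$$
t(r+1,r)\ge\frac{5r-\sqrt{9r^2+24r}+12}{2r(r+3)}=\frac1r+\frac1{r^2}+O(r^{-3}),
$$
I obtain
$$
t(r+1,r)-\frac1r\ge\frac{1-o(1)}{r^2}
$$
as $r\to\infty$ through odd values. Squaring and dividing by $4r^2(r!)^2$ yields
$$
\of{\frac{t(r+1,r)-1/r}{2r\cdot r!}}^2\ge\frac{(1-o(1))^2}{4r^6(r!)^2}=\frac{1-o(1)}{4r^6(r!)^2}.
$$

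Plugging this estimate into Corollary~\ref{cor:s-t-lower} gives
$$
s(r+1,r)\ge\frac1r+(1-o(1))\of{\frac{t(r+1,r)-1/r}{2r\cdot r!}}^2\ge\frac1r+\frac{1-o(1)}{4r^6(r!)^2},
$$
since a product of two $(1-o(1))$ factors is still $1-o(1)$. This completes the proof.

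There is no real obstacle here: the work has already been done in Theorem~\ref{thm:tr-sr}, Corollary~\ref{cor:s-t-lower}, and the Chung--Lu bound~\refeq{r-odd}. The only point requiring any care is bookkeeping of the $o(1)$ terms, ensuring that the $O(r^{-3})$ error in \refeq{r-odd} is absorbed into the $(1-o(1))$ factor multiplying $1/r^2$ before squaring, and that the outer $(1-o(1))$ from Corollary~\ref{cor:s-t-lower} combines harmlessly with this. All of these are routine asymptotic manipulations.
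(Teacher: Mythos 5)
Your derivation is correct and is exactly the paper's argument: substitute the Chung--Lu bound \refeq{r-odd} into Corollary~\ref{cor:s-t-lower}, getting $t(r+1,r)-1/r\ge(1-o(1))/r^2$, square, divide by $4r^2(r!)^2$, and absorb the resulting $(1-o(1))^2$ together with the outer $(1-o(1))$ factor into a single $1-o(1)$. The paper states this as a one-line "plugging in" step, and your bookkeeping of the asymptotics fills it in accurately.
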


\noindent
An analog of Corollary \ref{cor:odd} for even $r$ follows from Corollary \ref{cor:s-t-lower}
by using Bound~\refeq{r-even}.

Combining Theorem \ref{thm:tr-sr} for $r=3$ with the lower bound in \refeq{r=3}, we get
$$
0.438334\le s(4,3)+24\sqrt{(1-s(4,3))(3s(4,3)-1)}.
$$
This allows us to slightly improve the lower bound $s(4,3)\ge1/3$ given by Theorem~\ref{thm:1/r}.

\begin{corollary}\label{cor:s43}
  $s(4,3)\ge0.3333429$.
\end{corollary}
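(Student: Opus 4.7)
The plan is to specialize Theorem~\ref{thm:tr-sr} at $r=3$ and then substitute Razborov's lower bound $t(4,3)\ge 0.438334$ from \refeq{r=3} to obtain a single one-variable inequality in $s(4,3)$, which we then solve.

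First I would plug $r=3$ into Theorem~\ref{thm:tr-sr}. Observe that $2\cdot r!=12$ and $r(r+1)=12$, so the factor in front of the square root becomes $12\sqrt{12}=24\sqrt{3}$, and writing $(s(4,3)-1/3)\cdot 3=3s(4,3)-1$ absorbs the $\sqrt{3}$ inside the radical. The theorem then reads
\[
t(4,3)\le s(4,3)+24\sqrt{(1-s(4,3))\bigl(3s(4,3)-1\bigr)}.
\]
Combining with $t(4,3)\ge 0.438334$ yields
\[
0.438334\le s(4,3)+24\sqrt{(1-s(4,3))\bigl(3s(4,3)-1\bigr)},
\]
exactly the inequality previewed in the paragraph preceding Corollary~\ref{cor:s43}.

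Next I would show that this inequality forces $s(4,3)\ge 0.3333429$. Let $s=s(4,3)$; by Theorem~\ref{thm:1/r} we know $s\ge 1/3$, so the right-hand side is an increasing function of $s$ on a neighbourhood of $1/3$ (the derivative of $24\sqrt{(1-s)(3s-1)}$ blows up to $+\infty$ as $s\downarrow 1/3$, swamping the $+1$ coming from the first summand). Hence the inequality is equivalent to $s\ge s^\ast$ where $s^\ast$ is the unique root in $(1/3,1/2)$ of
\[
s+24\sqrt{(1-s)(3s-1)}=0.438334.
\]
Isolating the radical and squaring reduces this to a quadratic in $s$, whose smaller root above $1/3$ is the value $s^\ast$. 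A direct numerical evaluation (or the first-order expansion $s\approx 1/3+\epsilon$, which gives $24\sqrt{2\epsilon}\approx 0.438334-1/3$, i.e.\ $\epsilon\approx 9.57\cdot 10^{-6}$) shows $s^\ast\ge 0.3333429$, which yields the claimed lower bound.

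There is no real obstacle here beyond bookkeeping: both the specialization of Theorem~\ref{thm:tr-sr} and the solution of the resulting quadratic are routine. The only thing to be careful about is rounding direction when producing the final seven-digit constant, so that the reported bound $0.3333429$ remains valid after taking the appropriate root of the quadratic.
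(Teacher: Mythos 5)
Your proposal matches the paper's argument exactly: the paper likewise specializes Theorem~\ref{thm:tr-sr} to $r=3$, plugs in $t(4,3)\ge 0.438334$ to obtain $0.438334\le s(4,3)+24\sqrt{(1-s(4,3))(3s(4,3)-1)}$, and then solves this one-variable inequality; your simplification $12\sqrt{12(1-s)(s-1/3)}=24\sqrt{(1-s)(3s-1)}$ is correct, and the monotonicity and numerical resolution you supply are the routine bookkeeping the paper leaves implicit.
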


\section*{Acknowledgment}
We thank the anonymous reviewers for careful reading the manuscript and useful comments.



\end{document}